\title{Symmetries of geodesic flows on covers and rigidity}
\author{Daniel Mitsutani}
\address[Mitsutani]{Department of Mathematics, the University of Chicago, Chicago, IL, USA, 60637}
\email{mitsutani@math.uchicago.edu}
\date{\today}
\theoremstyle{plain}
\newtheorem{main}{Theorem}
\newtheorem{theorem}{Theorem}[section]
\newtheorem{proposition}[theorem]{Proposition}
\newtheorem{lemma}[theorem]{Lemma}
\newtheorem{corollary}[theorem]{Corollary}
\newtheorem{claim}[theorem]{Claim}
\newtheorem{quest}[theorem]{Question}
\newtheorem{definition}[theorem]{Definition}
\theoremstyle{remark}
\newtheorem{remark}[theorem]{Remark}
\newtheorem{remarks}[theorem]{Remarks}
\def\eps{\varepsilon}
\def\i{\iota}
\def\Diff{\operatorname{Diff} }
\def\Isom{\hbox{Isom} }
\def\dim{\hbox{dim} \, }
\def\title{\em}
\def\g{\frak g}
\def\bls{\backslash}
\def\h{\frak h}
\def\cW{\mathcal{W}}
\def\cV{\mathcal{V}}
\def\cF{\mathcal{F}}
\def\cE{\mathcal{E}}
\def\Conea{C^{1,\alpha}}
\def\cU{\mathcal{U}}
\def\cG{\mathcal{G}}
\def\cA{\mathcal{A}}
\def\lieg{\mathfrak{g}}
\def\cD{\mathcal{D}}
\def\wW{\wt{W}}
\def\quart{\frac{1}{4}}
\def\tM{\widetilde{M}}
\def\cH{\mathcal{H}}
\def\cP{\mathcal{P}}
\def\wt{\widetilde}
\def\ol{\overline}
\def\transverse{\,\raise2pt\hbox to1em{\hfil$\top$\hfil}\hskip -1em \hbox
to1em{\hfil$\cap$\hfil}\,} 
\newcommand\Z{\mathbb{Z}}
\newcommand\R{\mathbb R}
\newcommand\C{\mathbb{C}}
\newcommand\N{{\mathbb N}}
\newlength{\figboxwidth} \setlength{\figboxwidth}{5.8in}
\begin{document}

\begin{abstract} We define and study the \textit{foliated centralizer}: the group of $C^\infty$ centralizer elements of the lift of an Anosov system on a non-compact manifold which additionally preserve the stable and unstable foliations. When the Anosov system is the geodesic flow of a closed Riemannian manifold with pinched negative sectional curvatures, we prove some rigidity properties for the foliated centralizer of the lift of the flow to the universal cover: it is a finite-dimensional Lie group, which is moreover discrete (modulo the action of the flow itself) unless the metric is homothetic to some real hyperbolic metric. This result is inspired by the study of isometries of universal covers that appeared originally in the work of Eberlein \cite{eb2}, and later in Farb and Weinberger in \cite{fawein}, as well as centralizer rigidity results in dynamics.
\end{abstract}

\maketitle

\section{Introduction}

It is a widely explored theme in geometry that symmetries occur mostly in special circumstances. For instance, a generic Riemannian metric $g$ on a closed manifold $M$ admits no isometries. For metrics of negative Ricci curvature an old result of Bochner shows that  closed manifolds admit only finitely many isometries.  While it is easy to perturb a locally symmetric metric of negative sectional curvature on a closed manifold without changing the isometry group, in this setting it has been known that rigidity occurs when such a metric admits ``too many" isometries in some sense. The idea, pioneered by Eberlein, is to look for excess symmetries on covers: if the isometry group of the universal cover of an $(M,g)$ of negative sectional curvature is not discrete then $g$ is locally symmetric \cite{eb2}. This result was later extended with other methods in great generality in \cite{fawein} to any aspherical Riemannian manifold $M$ with an arbitrary metric.

In this paper we will assume that $M$ is a closed manifold and $g$ is a smooth Riemannian metric of negative sectional curvature $K \leq -1$. Associated to $g$ is a hyperbolic dynamical system on the unit tangent bundle $SM$ referred to as the geodesic flow $\varphi_t$, which flows a unit tangent vector along its tangent unit-speed oriented geodesic for time $t$. In our setting, the geodesic flow $\varphi_t$ is an \textit{Anosov flow}, meaning that there exists a $D\varphi^t$-invariant splitting $TSM = E^u \oplus E^s \oplus \R X_g$ and constants $\tau > 1, C>1$ so that for $t \geq 0$:
$$\begin{aligned}\label{eq: contractrate}
		& C^{-1}e^t \leq m(D\varphi^t|_{E^u}) \leq \|D\varphi^t|_{E^u}\| \leq Ce^{\tau t}, \hspace{.2cm}\\  & C^{-1}e^t \leq m(D\varphi^{-t}|_{E^s}) \leq \|D\varphi^{-t}|_{E^s}\| \leq Ce^{\tau t},
	\end{aligned}
$$
and where $X_g$ generates the flow $\varphi_t$. The bundles $E^u$ and $E^s$ are known to integrate to \textit{unstable and stable foliations} $W^u$ and $W^s$ of $SM$ which project to the \textit{horospherical foliations} of $M$, that is, the level sets of the Busemann functions on $M$ as a Hadamard space (see Subsection \ref{ssec: geodesicflows}) .

For $r \geq 0$ an integer, the symmetries of a $C^r$-flow $\varphi^t: X \to X$  are described by the centralizer group: the group $Z^r(\varphi^t)$ of $C^r$-diffeomorphisms of $X$ commuting with the system, i.e.:
$$Z^r(\varphi^t) = \{f \in \Diff^r(X): f \circ \varphi^t = \varphi^t \circ f, \, \forall t \in \R\}.$$

The elements of $Z(\varphi^t)$ are known to be rare in a generic sense as well \cite{bcw}. Moreover, when the dynamical system has some hyperbolicity such as the Anosov condition, in many settings it has been shown that a large centralizer group implies rigidity, generally meaning that the dynamical system is conjugate to an algebraic one. \textit{Our main result (Theorem \ref{main}) shows that, in the spirit of Eberlein's result \cite{eb2}, this occurs for the geodesic flow of a closed Riemannian manifold of negative sectional curvature, provided that as explained in the first paragraph one looks at the universal cover to find excess symmetries.}

First however, we have to determine more carefully the notion of a symmetry of the geodesic flow on the universal cover $S\wt{M}$. The centralizer group of the geodesic flow on the universal cover, which would be a natural candidate, turns out to always be too large, i.e. always infinite-dimensional (see Proposition \ref{notcomp}) since it is possible to construct perturbations which are localized around a wandering orbit.  In a non-compact setting, some uniformity of the centralizer elements is desirable since the notion of hyperbolicity itself depends on the metric structure chosen. Moreover, a uniformly continuous diffeomorphism of $S\wt{M}$ must preserve the stable and unstable foliations of the geodesic flow. 

In  fact, this latter condition -- that the centralizer element also preserves the  stable and unstable foliation -- will suffice for our theorem, and turns out to be a natural requirement in the classification up to conjugacy of Anosov systems in non-compact settings. For instance, recently in the study of Anosov diffeomorphisms in non-compact settings Groisman and Nitecki \cite{gn} have introduced the notion of a \textit{foliated conjugacy} -- a conjugacy between two diffeomorphisms preserving stable and unstable foliations -- and constructed many Anosov diffeomorphisms on $\R^2$  which can be distinguished by foliated conjugacy classes.

This leads us to the following definition for the group of symmetries of the geodesic flow and the stablee and unstable foliations simultaneously. Again, recall that $(M,g)$ is a negatively curved closed Riemannian manifold and let $S\wt{M}$ be the unit tangent bundle of the universal cover of $M$.
\begin{definition}  The \textit{foliated centralizer} of the geodesic flow on $S\wt{M}$ is the group $G < \Diff^\infty(S\wt{M})$ of all $\psi \in \Diff^\infty(S\wt{M})$ satisfying:
	\begin{enumerate}
		\item [(a)] $d\psi(X_g) = X_g$, where $X_g$ is the geodesic spray, 
		\item [(b)] $\psi$ preserves the horospherical foliations of $S\wt{M}$, i.e., $\psi(W^{u,s}(v)) = W^{u,s}(\psi (v))$.
	\end{enumerate}
\end{definition}

As is the case with isometries, we expect the presence of too many symmetries for the geodesic flow on the universal cover to determine a rigid situation. This is the main result we prove. In what follows, the foliated centralizer $G$ as defined above is endowed with the compact-open topology. Let $G_c = G/\langle\wt{\varphi}^t \rangle $ be the \textit{reduced foliated centralizer}, obtained by taking the quotient of $G$ by the geodesic flow on the universal cover $\wt{\varphi}^t$. We will say that $G$ is trivial if $G_c$ is discrete. A priori $G_c$ is only a Hausdorff topological space, but as we will see later (Theorem \ref{lie}) under our assumptions $G_c$ is in fact always a Lie group.

\begin{main}\label{main}  For $n \geq 3$, let $(M^n,g)$ be a closed Riemmanian manifold with negative strictly $\frac{1}{4}$-pinched sectional curvatures, that is, with sectional curvatures $-4 < K \leq -1$. If the reduced foliated centralizer $G_c$ is non-trivial, then $(M,g)$ is homothetic to a real hyperbolic manifold.
\end{main}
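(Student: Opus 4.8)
The strategy is to show that a nontrivial reduced foliated centralizer forces the isometry group of $\wt M$ to be non-discrete, after which Eberlein's rigidity theorem \cite{eb2} together with the strict pinching conclude the argument.

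Since $G_c$ is a Lie group by Theorem~\ref{lie}, non-triviality means $\dim G_c\geq 1$, so $G_c$ contains a nontrivial one-parameter subgroup. Because $\langle\wt\varphi^t\rangle\cong\R$ is central in $G$ (every element of $G$ commutes with the flow), this lifts to a one-parameter subgroup $\{\psi_s\}_{s\in\R}<G$ which acts nontrivially on the space of oriented geodesics of $\wt M$; let $Y$ be its infinitesimal generator. The defining conditions (a), (b) give $[Y,X_g]=0$ and that $\mathcal{L}_Y$ preserves the subbundles $E^u$ and $E^s$; a short computation with the Liouville contact form $\alpha$ on $S\wt M$ (normalized by $\alpha(X_g)=1$, $\ker\alpha=E^u\oplus E^s$) then shows $\mathcal{L}_Y\alpha=0$, so every $\psi_s$ is a contactomorphism of $S\wt M$ commuting with the Reeb (geodesic) flow and preserving the weak stable and weak unstable foliations.

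Next I would transfer $\psi_s$ to the ideal boundary $\partial\wt M$. The endpoint maps $p^\pm\colon S\wt M\to\partial\wt M$, $v\mapsto v^\pm$, are continuous open surjections that are constant along the weak stable, respectively weak unstable, leaves, and $\psi_s$ permutes those leaves; hence $\psi_s$ descends to continuous one-parameter flows $\beta^\pm_s$ of $\partial\wt M$, at least one of which is nontrivial. Restricting $\psi_s$ to a single weak stable leaf---identified by footpoints with $\wt M$ and foliated by the horospheres centered at the corresponding boundary point---and using that $\psi_s$ commutes with $\wt\varphi^t$ and sends strong leaves to strong leaves, one sees that it takes horospheres to horospheres of the image leaf by a map that shifts Busemann levels by a constant; propagating this identity between leaves shows that $\psi_s$ preserves the Busemann cocycles, and hence that $\beta^\pm_s$ preserve the cross-ratio on $\partial\wt M$. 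By cross-ratio rigidity for negatively curved universal covers (Otal in dimension two; Hamenst\"adt and Biswas in general), a cross-ratio preserving homeomorphism of $\partial\wt M$ is induced by a unique isometry of $\wt M$; applied to the nontrivial member of $\{\beta^+_s\}$, $\{\beta^-_s\}$, this yields a nontrivial one-parameter subgroup of $\Isom(\wt M)$, so $\Isom(\wt M)$ is not discrete.

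Finally, Eberlein's theorem \cite{eb2} says that a closed negatively curved manifold whose universal cover has non-discrete isometry group is locally symmetric; thus $\wt M$ is a symmetric space of rank one. With the normalization $K\leq -1$, the complex, quaternionic and octonionic hyperbolic spaces all attain sectional curvature $-4$, which is forbidden by the strict $\tfrac14$-pinching $-4<K$; hence $\wt M$ is real hyperbolic space up to a constant scaling, i.e.\ $(M,g)$ is homothetic to a real hyperbolic manifold. The main obstacle is the middle step, of turning the dynamical ``foliated self-conjugacy'' $\psi_s$ into honest boundary data: one must show $\psi_s$ preserves the Busemann cocycles, not merely the horospherical foliations up to reparametrization (here commutation with the geodesic flow is crucial, as it rules out a rescaling of Busemann levels), and one must reconcile the forward and backward boundary maps $\beta^\pm_s$---which ought to coincide---so that the cross-ratio, which depends on a boundary point in both roles, is genuinely preserved; this is where the stable and unstable halves of the foliated condition must be used together.
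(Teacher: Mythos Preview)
Your strategy is genuinely different from the paper's and has a serious gap at precisely the step you flag as the main obstacle.

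The paper never attempts to produce isometries of $\wt M$. Instead it uses the one-parameter subgroup in $G_0$ to show that the center-unstable foliation is $C^\infty$: first on an open dense set, by proving that the projection $\mathfrak g^s$ of the Lie algebra to $E^s$ is all of $E^s$ (this uses holonomy invariance of the distribution $\mathfrak g^s$---which is where the pinching enters in an essential way---together with a Hamenst\"adt-type argument that any $\Gamma$-invariant foliation of $\partial\wt M$ is trivial), and then everywhere via the north--south dynamics of $\Gamma$ on $\partial\wt M$. The conclusion is obtained from \cite{bfl} and minimal entropy rigidity \cite{bcg}, not from Eberlein.

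Your route tries instead to show that each $\psi_s$ is, modulo the flow, induced by an isometry of $\wt M$. The step ``$\beta_s^\pm$ preserves the cross-ratio, hence extends to an isometry'' is where this breaks down: the results you cite do not establish, for an arbitrary strictly $\tfrac14$-pinched closed $(M^n,g)$ with $n\ge 3$, that a M\"obius self-homeomorphism of $\partial\wt M$ is induced by an isometry of $\wt M$. Otal's theorem is for surfaces; Biswas's M\"obius rigidity yields only a $(1,\log 2)$-quasi-isometric extension, not an isometry; and I am not aware of a result of Hamenst\"adt giving what you need in this generality. In fact this step is essentially the self-conjugacy rigidity question for geodesic flows---whether every centralizer element is the differential of an isometry up to the flow---which the paper explicitly records in the introduction as an \emph{open} question. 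Your argument would prove that stronger statement en route, so it cannot be completed by citation. A telling symptom: apart from invoking Theorem~\ref{lie}, your argument uses the strict pinching only at the very last line to exclude the other rank-one symmetric spaces; if the cross-ratio step were available as a black box you would get ``locally symmetric'' for any closed negatively curved $M$ with $C^1$ horospheres, far stronger than what is currently known and bypassing exactly the difficulties discussed in Section~\ref{sec: rigidgeometricstructures}.
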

\begin{remark}
It is well-known that the pinching assumption can be taken to be pointwise rather than global for the results we introduce in Section \ref{sec: prel} to hold. That is, one may instead assume that  for every $x \in M$ the ratio of the sectional curvatures at $x$ is $\quart$-pinched. Using this and replacing the use of \cite{bfl} at the end of the proof of the theorem by the analogous result in \cite{hurderkatok} it is easy to see that the result also holds for negatively curved surfaces even without the pinching assumption.
\end{remark}

The $\quart$-pinching hypothesis is necessary for the theorem to be true -- indeed, any closed, complex hyperbolic manifold has non-trivial foliated centralizer. This however opens up the question of whether the theorem can be true without the $\quart$-pinching hypothesis but replacing the conclusion with $(M,g)$ is locally symmetric. A major difficulty in proving this generalization is that the Anosov splitting is in general not $C^1$ if the sectional curvatures are not $\quart$-pinched, as we discuss in Section \ref{sec: rigidgeometricstructures}. 

\subsection{Hidden Symmetries.} Farb and Weinberger \cite{hidsym} proved that on arithmetic manifolds only locally symmetric Riemannian metrics admit infinitely many isometries on finite covers which are not lifts of other isometries. Since these isometries can only be detected on covers, they call them \textit{hiddden symmetries}.  For manifolds of negative sectional curvature, this result is immediately recovered from Eberlein's result, since $[\Isom(\wt{M}): \pi_1(M)] = \infty$  implies that $  \Isom(\wt{M})$ is not discrete for $M$ compact. 

Theorem \ref{main} almost allows us to recover an analogous result for the centralizer groups of finite covers of $SM$. Recall that $\psi \in \Diff^\infty(SM)$ is a centralizer element of the geodesic flow $\varphi_t$ of a closed Riemannian manifold if $\psi \circ \varphi_t = \varphi_t \circ \psi$ for all $t \in \R$ or equivalently if $d\psi(X_g) = X_g$, where $X_g$ is the geodesic spray. Centralizer elements on closed Riemannian manifolds of negative sectional curvature preserve the horospherical foliations, so their lifts to the universal cover lie in the foliated centralizer. It is thus natural to ask:

\begin{quest}\label{hidsym} For $n \geq 3$, let $(M^n,g)$ be a closed Riemmanian manifold with strictly $\frac{1}{4}$-pinched  negative sectional curvature. Suppose there exist infinitely many (up to the action of the flow itself) centralizer elements of the geodesic flow of finite Riemannian covers of $(M,g)$ which are not lifts of centralizer elements of other covers of $SM$. Then is $(M,g)$  homothetic to a real hyperbolic manifold?
\end{quest}

Unlike the isometry case, where $[\Isom(\wt{M}): \pi_1(M)] = \infty$ immediately implies that $  \Isom(\wt{M})$ is not discrete since $\Isom(\wt{M})/ \pi_1(M)$ is compact, for the centralizer it is not obvious that the existence of infinitely many centralizer elements on finite covers implies that the foliated centralizer is not trivial.

The differential of an isometry of a Riemannian manifold is a centralizer element of the associated geodesic flow, so this result would evidently extend the hidden symmetry theorem for negatively curved $\quart$-pinched metrics to ``hidden symmetries"  of the geodesic flow. An interesting open question, in the negative curvature setting, is whether in fact every centralizer element of the geodesic flow is the differential of an isometry (up to composition with the flow itself). In this direction, a positive answer to Question  \ref{hidsym} would also show that there can only be finitely many more centralizer elements on all finite covers of $M$ than there are isometries. 

\subsection{Rigid Geometric Structures.} \label{sec: rigidgeometricstructures} Lastly, we point out that the foliated centralizer has in fact  been previously studied with rigidity applications in mind as the group of isometries of the structure determined by the geodesic spray $X_g$ and the bundles $E^u$ and $E^s$ tangent to the horospherical foliations. It has long been known that $(X_g, E^u, E^s)$ determines a pseudo-Riemannian metric $h_{KC}$ on $SM$, referred to as the Cartan-Kanai form (see Subsection \ref{sssec: kanai}). Since $h_{KC}$ is determined by $(X_g, E^u, E^s)$, the foliated centraliizer $G$ is a subgroup of the (global) isometry group of $h_{KC}$.  

When the horospherical foliations are smooth, $h_{KC}$  is a \textit{rigid geometric structure} in the sense of Gromov. In its most general formulation, a rigid geometric structure   \cite{gr} $T$ on a closed Riemannian manifold $V$ in the sense of Gromov is a geometric structure, such as a tensor, whose pseudo-groups of local isometries are finite-dimensional. Typical examples of such $T$ are affine connections and smooth Riemannian or pseudo-Riemannian metrics.  In a well-known paper, Benoist, Foulon and Labourie \cite{bfl} use this fact to show, using Gromov's theory, that a closed Riemannian manifold with negative sectional curvature and smooth horospherical foliations must be locally symmetric.

When $M$ is a closed manifold of negative curvature with $C^r$ horospherical foliations, the pseudo-Riemannian metric $h_{KC}$ is still well-defined but only has $C^r$ regularity.  However, as is well known, closed Riemannian manifolds of negative sectional curvature admit only Hölder continuous horospherical foliations in general, so $h_{KC}$ does not define a rigid geometric structure in the sense of Gromov, since in low regularity pseudo-Riemannian metrics can admit infinite dimensional local isometry pseudo-groups. On the other hand, while requiring $C^2$ regularity of the horospherical foliations would give that $G$ is a (finite-dimensional) Lie group, this proves too restrictive since conjecturally this situation can only happen when $M$ is locally symmetric. 

A natural class of intermediary $C^1$ regularity of horospherical foliations occurs when the sectional curvature of $M$ are $\quart$-pinched. In this situation $h_{KC}$ is still only $C^1$, so it is not immediately obvious that the foliated centralizer is finite-dimensional. Thus, the natural first step of the proof of Theorem \ref{main} is to show that $G$ is a Lie group:

\begin{main}
	\label{lie} Let $(M,g)$ be a closed Riemannian manifold with $C^1$ horospherical foliations. Then the foliated centralizer $G$ is a finite dimensional Lie group.
\end{main}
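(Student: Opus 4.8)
The plan is to realize $G$ as a group of isometries of the Cartan--Kanai structure and then, \emph{despite} the fact that this structure is only $C^1$, to force $G$ to be a finite-dimensional Lie group by proving a one-jet rigidity statement, using it to embed $G$ into an adapted frame bundle, and feeding local compactness into the Bochner--Montgomery theorem on locally compact transformation groups. The whole difficulty is that when the horospherical foliations are merely $C^1$ one cannot directly invoke that the isometry group of a pseudo-Riemannian metric is a Lie group, nor the usual ``an isometry is affine for its Levi--Civita connection, hence determined by its $1$-jet'' argument, since the relevant connection is then only $C^0$ on $S\wt{M}$.

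\textbf{Setup.} By the construction recalled in Subsection~\ref{sssec: kanai} (cf.\ \cite{bfl}), the triple $(X_g,E^u,E^s)$ canonically determines on $S\wt{M}$ the Cartan--Kanai pseudo-metric $h_{KC}$ together with the canonical connection $\nabla^{KC}$ preserving the splitting $TS\wt{M}=E^u\oplus E^s\oplus\R X_g$; with $E^u,E^s$ of class $C^1$ both objects are $C^1$ on $S\wt{M}$, but --- and this is the point I would exploit --- the strong stable and unstable leaves are $C^\infty$ submanifolds and $\nabla^{KC}$ restricts to a \emph{smooth} (flat) affine connection on each of them. In addition the geodesic flow is a contact flow: there is a smooth contact form $\alpha$ with $\alpha(X_g)=1$, $\ker\alpha=E^u\oplus E^s$, and $d\alpha$ restricting to a perfect pairing between the complementary Lagrangian subbundles $E^u$ and $E^s$. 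Every $\psi\in G$ preserves $(X_g,E^u,E^s)$ by definition, hence also preserves $h_{KC}$, $\nabla^{KC}$, $\alpha$ and $d\alpha$.

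\textbf{One-jet rigidity.} First I would show: if $\psi\in G$ fixes a point $v_0$ with $d_{v_0}\psi=\mathrm{id}$, then $\psi=\mathrm{id}$. Let $F=\{v:\psi(v)=v,\ d_v\psi=\mathrm{id}\}$, which is closed and, since $\psi$ commutes with $\wt\varphi^t$, flow-invariant. For $v\in F$, $\psi$ maps the connected leaf $W^u(v)$ to itself and is affine on it for the smooth connection $\nabla^{KC}|_{W^u(v)}$, fixing $v$ with trivial differential there; an affine self-map of a connected manifold-with-connection with trivial $1$-jet at a point is the identity, so $\psi|_{W^u(v)}=\mathrm{id}$. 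Moreover for each $w\in W^u(v)$ one gets $d_w\psi=\mathrm{id}$: it fixes $E^u_w=T_wW^u(v)$ (identity on the leaf), fixes $X_g(w)$ (commutes with the flow), and fixes $E^s_w$ because $d_w\psi$ is an $h_{KC}$-isometry preserving the splitting and $d\alpha$ pairs $E^u_w$ with $E^s_w$ perfectly. Thus $W^u(v)\subseteq F$, and symmetrically $W^s(v)\subseteq F$. By the local product structure of the Anosov flow $\wt\varphi^t$, the flow- and leaf-saturation of $v$ is a neighborhood of $v$, so $F$ is open; connectedness of $S\wt{M}$ gives $F=S\wt{M}$.

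\textbf{Frame bundle, local compactness, conclusion.} Let $P\to S\wt{M}$ be the $C^1$ principal $GL(n-1,\R)$-bundle of frames adapted to $h_{KC}$ and the splitting (a point over $v$ being a basis of $E^u_v$, which via $d\alpha$ gives a dual basis of $E^s_v$, together with $X_g(v)$), so $\dim P=(2n-1)+(n-1)^2$. Then $G$ acts on $P$ by push-forward of frames, and by the rigidity statement this action is \emph{free}. Fixing $p_0\in P$, the orbit map $o\colon G\to P$, $o(\psi)=\psi_*p_0$, is continuous and injective; the remaining key point is that $o$ is a homeomorphism onto a closed subset of $P$. For this I would run the rigidity argument quantitatively: bounds on the $1$-jet of $\psi\in G$ at $v_0$ propagate to $C^1_{\mathrm{loc}}$-bounds along the stable and unstable leaves and across the splitting, so from any sequence $\psi_n\in G$ with $o(\psi_n)\to p$ one extracts by Arzel\`a--Ascoli a $C^1_{\mathrm{loc}}$-limit $\psi$ that is an $h_{KC}$-isometry preserving $(X_g,E^u,E^s)$; checking that such a limit is in fact smooth (hence in $G$) and using injectivity of $o$ shows $o(G)$ is closed and $\psi_n\to\psi$ in the compact--open topology. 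Consequently $G\cong o(G)$ is locally compact; a locally compact group acting effectively and continuously by $C^\infty$-diffeomorphisms on a manifold is a Lie group by the classical theorem of Bochner and Montgomery, and since $o$ embeds $G$ into the finite-dimensional $P$ we get $\dim G\le (2n-1)+(n-1)^2$. The hard part will be the closedness/compactness step --- in particular verifying that a $C^1_{\mathrm{loc}}$-limit of elements of $G$ is again smooth, which forces one to combine regularity theory for isometries of $C^1$ (pseudo-)metrics with the preservation of the smooth contact data $(\alpha,X_g)$ --- together with the underlying fact, used in the rigidity step, that the Kanai connection restricts smoothly to the horospherical leaves even though it is only $C^0$ on $S\wt{M}$.
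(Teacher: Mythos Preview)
Your approach is essentially the paper's: both use that every $\psi\in G$ preserves the Kanai connection, that this connection restricted to each horospherical leaf is smooth, flat and complete, and that consequently $\psi$ acts affinely on leaves and is therefore determined by its $1$-jet at a single point; local compactness of $G$ then follows and Montgomery--Zippin (Theorem~\ref{mz}) finishes. The paper organizes this via explicit normal-form charts $\cH^{u,s,cu}_x$ (the leafwise exponential maps of $\nabla$) and constructs the limit of a sequence directly using the global Bowen bracket, while you package the same content as a free action on an adapted frame bundle and invoke Arzel\`a--Ascoli; these are equivalent reorganizations of the same argument.

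One correction to your plan: do not look for a general ``regularity theory for isometries of $C^1$ pseudo-metrics'' to prove that the $C^1_{\mathrm{loc}}$-limit $\psi$ is smooth --- no such theory is available in this generality, and the preservation of the smooth contact data $(\alpha,X_g)$ alone is not enough either. The mechanism the paper uses is the one you already have in hand from the rigidity step: the limit restricted to each leaf $\wW^{cu}(x)$ or $\wW^s(x)$ is a $C^0$-limit of leafwise-affine maps $\cH^{\bullet}_{\phi_n(x)}\circ D_x\phi_n\circ(\cH^{\bullet}_x)^{-1}$, hence itself of this form and $C^\infty$ on the leaf, and continuously so as $x$ varies because the charts $\cH^\bullet_x$ are continuously $C^\infty$ along the foliation. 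Journ\'e's theorem (Theorem~\ref{journe}) then upgrades leafwise smoothness along the transverse pair $\wW^{cu},\wW^s$ to $\psi\in C^\infty(S\wt{M})$. With this substitution your outline is complete.
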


The proof of Theorem \ref{lie} uses the fact that elements of $G$ define affine transformations with respect to the \textit{Kanai connection} $\nabla$, an affine connection associated to $h_{KC}$, to show that $G$ is locally compact (Section \ref{sec: compact}).  Then by the classical work of Montgomery and Zippin on Hilbert's fifth problem this implies that $G$ is a finite-dimensional Lie group.

\begin{remark}
	
	The use of the classical theory of Montgomery-Zippin to show certain transformation groups in dynamics are finite dimensional Lie groups has recently yielded many rigidity results for centralizers; for instance, simultaneously to the writing of this work, Damjanovic-Wilkinson-Xu \cite{dwxpre} have also announced rigidity results which use this to classify the centralizer of certain partially hyperbolic diffeomorphisms. 
\end{remark}

\subsection{Outline of the Proof.}

After we conclude that $G$ is a Lie group (Section \ref{sec: compact}) we assume as in the hypothesis of Theorem \ref{main} that $G_c$ is not discrete. Then non-discreteness of $G_c$ implies by classic results also of \cite{mz} that it must define a smooth 1-parameter action on $\wt{V}$. We show that the partition of $\wt{V}$ by the orbits of this action must contain an open set of full dimension by an argument due to Hamenstadt using 1/4-pinching, so that the action of $G$ admits an open and dense orbit in which the horospherical foliations must be smooth (Section \ref{sec:smooth}). Finally, using the dynamics of the action of $\pi_1(M)$ on $\partial \wt{M}$, we show that the horospherical foliations must in fact be smooth everywhere. The result then follows from the main theorem of \cite{bfl}.

\subsection{Acknowledgements} I would like to thank Amie Wilkinson for all her suggestions and continued guidance in the process of research leading up to this paper, and also for her help in reviewing the text. I also thank Boris Hasselblatt and Ursula Hamenstadt for their useful replies to my inquiries and Jonathan DeWitt for his help reviewing the text.

\section{Preliminaries} \label{sec: prel}

In this section, we review important constructions and give precise definitions needed for the proofs to follow.

\subsection{Topological Transformation Groups} We review some concepts related to transformation groups and Montgomery-Zippin's theory that are fundamental to our results.

\subsubsection{$C^r$-topologies} \label{ssec: tgroups} Let $0 \leq r \leq \infty$ be an integer. For non-compact manifolds, there are two available choices of topologies on function spaces. For our purposes, we focus on the \textit{weak $C^r$-topology} on $C^r(M, N)$ for $M$ and $N$ smooth manifolds is defined as follows. For $f \in C^r(M,N)$, $(\varphi, U)$ and $(\psi, V)$ charts on $M$ and $N$ respectively, $K\subseteq M$ a compact set such that $f(K) \subseteq V$, $\eps > 0$ let $N^r(f; (\varphi, U), (\psi, V), K, \eps)$ be given by  $$\{g \in C^k(M,N): g(K) \subseteq V,  \|D^k(\psi f \varphi^{-1})(x) - D^k(\psi fg\varphi^{-1})(x)\| < \eps\},$$
where $D^k$ represents the $k$-th partial derivatives of order and $k$ runs over $k = 0, ..., r$ (when $r = \infty$, $k$ runs over $\N$). The weak $C^r$-topology, is defined to have the sets $N^r(f; (\varphi, U), (\psi, V), K, \eps)$ as a basis, which we from now on we simply refer to  as the $C^r$-topologies. In other words, it is the topology of uniform $C^r$-convergence on compact sets. The weak $C^r$-topologies are second countable \cite[p. 35]{hir}.

\subsubsection{Montgomery-Zippin's solution to Hilbert's Fifth Problem} \label{sssec:mz}

We recall some facts about group actions on manifolds. Throughout, let $M$ be a smooth manifold. A \textit{topological (resp. Lie) group of $C^r$-transformations} is a Hausdorff topological (resp. Lie) group $G$ with a homomorphism into $\Diff^r(M)$. Partially giving an answer to Hilbert's fifth problem, Gleason, Montgomery-Zippin and Yamabe proved a criterion for when topological groups admit a compatible (finite-dimensional) smooth manifold structure, i.e., are Lie groups, in terms of the no small subgroups (NSS) criterion. 

We will use the following consequence of their theorem, together with a result of Bochner-Montgomery \cite[Thm. 1]{mont} showing that a locally compact subgroup of $\Diff^r(M)$ has the no small subgroups property. Recall that we say the action of $G$ is effective if its kernel into $\Diff^r(M)$ is trivial.

\begin{theorem} \label{mz}
	\cite{mz, yam}
	Let $G$ be a topological group of $C^r$-transformations on a smooth
	manifold $M$, with $r \geq 1$. If $G$ is effective and locally compact, then $G$ admits a unique smooth structure which makes it is a Lie group of $C^r$-transformations.
\end{theorem}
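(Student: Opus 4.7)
The plan is to reduce the theorem to two classical inputs that are already available in the excerpt: the Bochner-Montgomery theorem, cited in the paragraph preceding the statement, which guarantees that a locally compact subgroup of $\Diff^r(M)$ with $r \geq 1$ has no small subgroups (NSS); and the Gleason-Yamabe-Montgomery-Zippin solution to Hilbert's fifth problem, which asserts that any locally compact Hausdorff group with NSS carries a unique structure of a finite-dimensional Lie group compatible with its topology.

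First I would combine these two inputs directly. Because $G$ acts effectively on $M$ by $C^r$-diffeomorphisms and is locally compact in the (weak) $C^r$-topology, \cite[Thm.~1]{mont} produces a neighborhood $U$ of the identity in $G$ which contains no non-trivial subgroup of $G$. Feeding this NSS conclusion into the theorems of \cite{mz, yam} then equips $G$ with the structure of a finite-dimensional Lie group whose underlying topology coincides with the original one. At this stage $G$ is a Lie group in the abstract sense, but the $C^r$-regularity of the action on $M$ has not yet been used except as a means of producing NSS.

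Next, I would promote this to a Lie group of $C^r$-\emph{transformations} by invoking Bochner-Montgomery's regularity theorem (the other conclusion of \cite{mont}): if a Lie group $G$ acts continuously on a $C^r$-manifold $M$ by $C^r$-diffeomorphisms, then the action map $G \times M \to M$ is itself $C^r$. Continuity of the action is automatic because the topology of $G$ was pulled back from the $C^r$-topology on $\Diff^r(M)$. Uniqueness of the smooth structure on $G$ follows from standard Lie theory: any two Lie group structures realizing the same locally compact Hausdorff topology are identified by the identity map, which as a continuous homomorphism between Lie groups is automatically smooth, hence a Lie group isomorphism.

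The genuine difficulty in the statement, which lies entirely in the cited Bochner-Montgomery step, is the production of NSS: one must argue that no non-trivial subgroup of $\Diff^r(M)$ can sit inside an arbitrarily $C^1$-small neighborhood of the identity. The classical argument linearizes a would-be small element of finite order at a fixed point and iterates, showing that its orbit displacements grow beyond any prescribed smallness unless the element is trivial. Since this ingredient is already supplied by the reference, the proof as outlined above reduces the theorem to a citation of the classical theory, which is how it appears in \cite{mz, yam}.
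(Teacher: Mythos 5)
Your proposal is correct and matches the paper's approach exactly: the paper does not prove Theorem~\ref{mz} but derives it by citing \cite[Thm.~1]{mont} for the NSS property of locally compact subgroups of $\Diff^r(M)$, feeding that into Gleason--Montgomery--Zippin--Yamabe to obtain the Lie group structure, and then (in the remark immediately following the theorem) invoking \cite[Thm.~4]{mont} to obtain $C^r$-regularity of the action map. Your uniqueness argument via automatic smoothness of continuous homomorphisms between Lie groups is a standard and correct way to handle the point the paper leaves implicit.
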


Moreover, it follows from \cite[Thm. 4]{mont} that in the situation above the natural action map $G \times M \to M$ is of class $C^r$. In particular, as we will need later, the action of 1-parameter subgroups of $G$ corresponds to a $C^r$ action of $\R$ by $C^r$-diffeomorphisms on $M$.

\subsection{$C^r$-foliations} We lay out standard definitions and results relating to smooth foliations, using the same notations as \cite{clark}. 

For a smooth $m$-dimensional $C^s$-manifold $M$, $1 \leq k \leq m-1$ and $0 \leq r \leq s$,  a \textit{$k$-dimensional $C^r$-foliation} is a decomposition of $M$ into disjoint subsets $\{\cW_i\}_{i\in I}$ such that each $\cW_i$ is a connected $C^s$-submanifold of $M$ and for each $x \in M$ there is an open neighborhood $U$ of $x$ and a $C^r$-coordinate chart $\psi: U \to B_k \times B_{k-m} \subseteq \R^k \times \R^{k-m}$ such that $\psi(x) = (0,0)$ and for each $i \in I$ and each connected component $W$ of $U \cap \cW_i$ there is a unique $p \in B_{k-m}$ such that $\psi(W) = B_k \times \{p\}$.

\subsubsection{Foliations with continuously $C^r$-leaves} A foliation $\cW$ of a $C^s$-manifold $M$ is said to have \textit{continuously $C^r$ leaves} if there is an atlas of foliation charts $(\psi_j, U_j)_{j \in J}$ for $\cW$ with $\psi_j: U_j \to B_k \times B_{m-k}$ such that the compositions for $p \in B_{m-k}$, 
$$\zeta_{j,p} = \psi_j^{-1} \circ i_p: B_k \to M, $$
where $i_p(x) = (x,p)$ are $C^r$ and vary continuously on $p$ in $C^r(B_k, M)$.

For another smooth manifold $N$, $f: M \to N$ is \textit{continuously $C^r$ along $\cW$} if for each chart $(\psi_j, U_j)$ the compositions $f \circ \zeta_{j,p}$ depend continuously on $p \in B_{m-k}$ inside of $C^r(B_k, N)$.

\begin{remark}
	The continuously $C^r$ along $\cW$ condition is commonly referred to in the  literature as uniformly $C^r$ along $\cW$, especially in compact settings. We use the term continuously to emphasize that the leaves and maps do not have to vary uniformly continuously in $C^r$-topology in the non-compact setting.
\end{remark}

\subsubsection{Connections along foliations} \label{ssec: conn} Let $r \geq 0$ be an integer. Throughout, a $C^r$ connection on a $C^{r+1}$ vector bundle $E$ over $M$ is a linear map $\nabla: \Gamma^{r+1}(E) \to \Gamma^r(E \otimes T^*M)$, where $\Gamma^{r+1}(E)$ is the space of $C^{r+1}$-sections of $E$ .

Fix $M$ and $\cW$ a foliation with continuously $C^{r+1}$ leaves. Let $E \subseteq TM $ be a vector bundle over $M$ which is continuously $C^{r+1}$ along leaves. Let $\Gamma_\cW^{r+1}(E)$ be the space of sections $Z: M \to E$ which are continuously $C^{r+1}$ along $\cW$. Given $X \in \Gamma^{r+1}_\cW(E)$, we may consider it as a $C^{r+1} $ section $X: M_\cW \to E$, where $M_\cW = \sqcup_{i \in I} \cW_i$ (i.e., the topologically disjoint union of the leaves). Then a \textit{ $C^r$ connection on $E$ along $\cW$} is a $C^r$ connection on $E$ considered as a $C^{r+1}$ vector bundle over $M_{\cW}$. It is said to be \textit{continuously $C^r$ along $\cW$} if
$$ \nabla(\Gamma_\cW^{r+1}(E)) \subseteq \Gamma_\cW^r(T^* \cW \otimes E),$$
where we regard $\nabla$ as a map $\Gamma_\cW^{r+1}(E) \to \Gamma(T^* \cW \otimes E)$ by $X \mapsto (Z \mapsto \nabla_X Z)$.

\subsubsection{Journé's Theorem} Finally, in dealing with maps which are regular along a pair of transverse foliations of complementary dimensions, a basic tool is Journé's Theorem:

\begin{theorem} \cite{journe} \label{journe} Let $\cF^s$ and $\cF^u$ be two  transverse foliations of a manifold $M$ with continuously $C^\infty$ leaves. If $f\colon M \to N$ is continuously $C^\infty$ along the leaves of $\cF^s$ and $\cF^u$ then $f \in C^\infty(M,N)$. 
\end{theorem}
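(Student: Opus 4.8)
The plan is to reduce the problem to the one-dimensional case (iterated along the leaf directions) and then to the following local assertion: if a function $u$ on a product neighborhood $B^k \times B^\ell \subseteq \R^k \times \R^\ell$ is $C^\infty$ in the first slot with derivatives depending continuously on the second slot, and $C^\infty$ in the second slot with derivatives depending continuously on the first slot, then $u$ is jointly $C^\infty$. Working in foliation charts for $\cF^s$ and $\cF^u$ and composing with the parametrizations $\zeta_{j,p}$ of the leaves, ``continuously $C^\infty$ along $\cF^s$ and $\cF^u$'' translates exactly into this hypothesis, after checking that the transverse coordinate for one foliation can be taken to be a leaf coordinate for the other (this uses transversality and complementary dimension, so that the two foliation charts can be simultaneously straightened to a bi-foliated chart — a standard Frobenius-type normalization). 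Joint smoothness is a local statement, so it suffices to establish it on such charts; continuity of all the mixed partials then follows from the continuity hypotheses built into the ``continuously $C^r$ along $\cW$'' definition.

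The core step I would carry out is the scalar case: a bounded function $u(x,y)$ of two real variables which is $C^\infty$ in $x$ (uniformly, with each $\partial_x^a u$ continuous in $y$) and $C^\infty$ in $y$ (with each $\partial_y^b u$ continuous in $x$) is $C^\infty$ in $(x,y)$. The argument proceeds by induction on the total order of the derivative one wants to control. The key analytic input is a quantitative interpolation/Landau–Kolmogorov type inequality: if $v$ is $C^N$ on an interval with $\|v\|_{C^0}$ and $\|v\|_{C^N}$ both bounded, then the intermediate norms $\|v\|_{C^j}$, $0 < j < N$, are bounded by a geometric-mean type combination $\|v\|_{C^0}^{1-j/N}\|v\|_{C^N}^{j/N}$ (up to constants depending only on $N$ and the interval). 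Applying this to the function $y \mapsto \partial_x^a u(x_0,y) - \partial_x^a u(x_1,y)$ in the $y$-variable — which has small $C^0$ norm by continuity of $\partial_x^a u$ in $x$, and bounded $C^N$ norm by the hypothesis on $y$-derivatives — yields that $\partial_y^b \partial_x^a u$ exists, is obtained as a uniform limit of difference quotients, and is continuous; a symmetric argument and Schwarz's theorem give equality of mixed partials in either order. Iterating this over all multi-indices produces all partials of $u$ and their continuity, i.e. $u \in C^\infty$.

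From the scalar case I would bootstrap to the general statement: first to functions on $B^k \times \R$ by treating the $k$ leaf-directions of $\cF^s$ one coordinate at a time while carrying the continuity in the remaining variables along (the hypotheses are stable under this since ``continuously $C^\infty$ along a foliation'' is exactly a statement about the leaf parametrizations $\zeta_{j,p}$ varying continuously in $C^\infty$), then to $B^k \times B^\ell$ by applying the two-variable result with $x \in B^k$ playing the role of the first variable and $y \in B^\ell$ the second, the vector-valued target $N$ handled by composing with coordinate charts. Gluing the local conclusions over a locally finite cover of $M$ by bi-foliated charts gives $f \in C^\infty(M,N)$.

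The main obstacle is the scalar two-variable case, and specifically getting the interpolation inequality to interact correctly with the induction: one must be careful that the constants in the Landau–Kolmogorov estimate do not blow up as the order $N$ of the ``high'' derivative is taken large (one should fix $N$ large relative to the order of the derivative being produced, not let it grow without control), and one must verify that the limit of difference quotients one extracts is genuinely the partial derivative rather than merely a pointwise limit — this is where the uniform (in the transverse variable) convergence coming from the interpolation bound, together with the standard ``uniform convergence of derivatives'' theorem, is essential. Everything else — the Frobenius straightening of the two transverse foliations, the passage from charts to the manifold, the vector-valued target — is routine once the local scalar statement is in hand.
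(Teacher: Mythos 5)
The paper does not include a proof of this theorem --- it is cited from Journ\'e \cite{journe} --- so I evaluate your argument on its own. The fundamental gap is the first reduction: you assert the two foliation charts ``can be simultaneously straightened to a bi-foliated chart --- a standard Frobenius-type normalization.'' But the foliations $\cF^s$ and $\cF^u$ are \emph{not} assumed to be $C^\infty$ (or even $C^1$) foliations; only their individual leaves are $C^\infty$ submanifolds, and the leaf parametrizations vary merely continuously. In the intended application (stable and unstable foliations of an Anosov flow) the transverse regularity is typically only H\"older, so no Frobenius theorem or smooth normal form is available. The local product structure gives a bi-foliated chart $\phi$ that is only a $C^0$ homeomorphism, so even if you showed the chart expression $u = f\circ\phi^{-1}$ to be jointly smooth on $B^k\times B^\ell$, you could not conclude $f = u\circ\phi$ is smooth on $M$. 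Indeed, the existence of a $C^\infty$ bi-foliated chart under the stated hypotheses is essentially equivalent to the theorem itself (apply it to the inverse chart map, whose components are smooth along the leaves of both foliations), so the reduction is circular. The entire content of Journ\'e's result is precisely handling this low transverse regularity.

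Journ\'e's actual proof avoids any straightening: one works directly on $M$, uses the $C^0$ local product structure only to locate grid points reachable by moving first along a leaf of $\cF^s$ and then along a leaf of $\cF^u$, Lagrange-interpolates values of $f$ on such grids using the leafwise Taylor expansions, and shows that $f$ is approximated at every point by polynomials of arbitrary degree with the right error estimate; a converse to Taylor's theorem then yields $f\in C^\infty$. There is also a secondary flaw in your scalar step: applying a Landau--Kolmogorov estimate to $y\mapsto \partial_x^a u(x_0,y)-\partial_x^a u(x_1,y)$ for $a>0$ presupposes that $\partial_x^a u$ is already $C^N$ in $y$, which is exactly the mixed regularity you are trying to establish rather than a hypothesis --- the hypothesis bounds $\partial_y^b u$, not $\partial_y^b\partial_x^a u$. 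One must instead interpolate finite differences of $u$ itself in $x$, and the resulting bookkeeping is considerably more delicate than your sketch suggests.
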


\subsection{Geodesic Flows in Negative Curvature} \label{ssec: geodesicflows} From now on, we fix a closed Riemannian manifold $(M,g)$ with negative sectional curvatures $K \leq  -1$ (always obtainable by homothety) and $\wt{M}$ its universal cover. We write $\Gamma := \pi_1(M)$, and observe that in what follows all forms are invariant by the action of $\Gamma$ by deck transformations on $\wt{M}$ and its associated tangent bundle, unless explicitly noted. Throughout, $V:= SM$ denotes the unit tangent bundle of a manifold in general, $\wt{V}$ is its universal cover, and the \textit{flip map} is denoted by $\i$, i.e. the diffeomorphism of $SM$ given by $(x,v) \mapsto (x,-v)$.

Let $\cV \subseteq TT\wt{M}$ be the \textit{vertical bundle} tangent to vector fields tangent to the fibers of $T\wt{M}$ and $\cH \subseteq TT\wt{M}$ be the \textit{horizontal bundle} which is tangent to parallel vector fields on $TM$. The Sasaki metric $g_{Sas}$ on $T\wt{M}$ is defined by pulling  back the metric $g \oplus g$ using the isomorphism $TT\wt{M} = \cV \oplus \cH \cong T\wt{M}  \oplus T\wt{M}$.  We denote the Sasaki norm simply by $\| \cdot \|$ and we endow $T\wt{M}$ and the unit tangent bundle $S \wt{M}$ with the Sasaki metric and with the metric space structure coming from the Riemannian distance $d_{Sas}$ given by the Sasaki metric on $T\wt{M}$ and $S\wt{M}$ respectively. When $g$ is complete, $g_{Sas}$ is complete and thus so is $S \wt{M}$ as a metric space. 

The ideal boundary $\partial M$ is defined as  equivalence classes $[\gamma]$ of oriented geodesics $\gamma$ which are asymptotic to each other as $t \to \infty$, and $\partial M$ is endowed with the quotient topology of the equivalence relation. The natural projection map $\wt{V} \to \partial M$ is given by $v \mapsto [\gamma_v]$, where $\gamma_v$ is the oriented geodesic through $v$. The set of preimages of a fixed $[\gamma_v]$ under this projection is denoted $\wt{W}^{cs}(v) \subseteq \wt{V}$, and as $v$ varies $\{\wt{W}^{cs}(v)\}_{v \in \wt{V}}$ defines a foliation of $\wt{V}$ named the center-stable foliation. Moreover, we define the stable foliation  $\{\wt{W}^{s}(v)\}_{v \in \wt{V}}$ by defining the sets $\wt{W}^{s}(v)$ to be the set of $w \in \wt{W}^{cs}(v)$ such that $d_{Sas} (\varphi^t(v), \varphi^t(w)) \to 0$ as $t \to  \infty$. The center-unstable $\{\wt{W}^{cu}(v)\}_{v \in \wt{V}}$ and unstable foliations $\{\wt{W}^{u}(v)\}_{v \in \wt{V}}$  are defined as the images of the center-unstable and unstable foliations respectively. Throughout, for $v \in SM$, we denote the projection of $v$ to $\partial M$ under $v \mapsto [\gamma_v]$ by $v_+ = [\gamma_v]$ and similarly we write $v_- = [\gamma_{\iota(v)}] \in \partial M$.

Lastly, given $v \in S\wt{M}$ with basepoint $p$, we define the horosphere $B_{v_+}(p) := \pi(\wt{W}^s(v)) \subseteq \wt{M}$, where $\pi: S\wt{M} \to \wt{M}$ is the standard projection map.
\subsubsection{Geodesic Flows}  \label{ssec: geodesic} 

For $(M, g)$ as above, let $\lambda^*$ be the canonical $1$-form on the cotangent bundle of the universal cover $T^*\tM$ and $\omega^* = d\lambda^*$. Denote by $X_g$ the geodesic spray, i.e., the vector field generating the geodesic flow $\varphi^t$ on $T\wt{M}$. Pulling back these forms by the bundle isomorphism $T^*\wt{M} \cong T \wt{M}$ given by the Riemannian metric $g$, we obtain forms $\lambda$ and $\omega$ on $T\tM$. Moreover, the isomorphism $TT\wt{M} \cong T^*T\wt{M}$ induced by the Sasaki metric sends $X_g$ to the $1$-form $\lambda$, which is to say that $\lambda(Y) = \langle X_g, Y \rangle_{Sas}$, providing an alternate description of the geodesic spray $X_g$. Moreover, the $1$-form $\lambda$ is invariant by the geodesic flow, i.e., $\mathcal{L}_{X_g}\lambda = 0$ or equivalently $\varphi^t_* \lambda = \lambda$ for all $t \in \R$. 

Recall that for a linear map $A$ between normed linear spaces its co-norm $m(A)$ is given by:  $$m(A) := \inf_{\|v\| = 1} \|Av\|.$$ The geodesic flow generated by $X_g$ is an \textit{Anosov flow} when restricted to the unit tangent bundle $V:= SM$, that is, there exists a $D\varphi^t$ (we denote the geodesic flow on $SM$ also by $\varphi^t$) splitting $TV = E^u \oplus E^s \oplus \R X_g$ and constants $\tau > 1, C>1$ so that for $t \geq 0$:
\begin{equation}\begin{aligned}\label{eq: contractrate}
& C^{-1}e^t \leq m(D\varphi^t|_{E^u}) \leq \|D\varphi^t|_{E^u}\| \leq Ce^{\tau t}, \hspace{.2cm}\\  & C^{-1}e^t \leq m(D\varphi^{-t}|_{E^s}) \leq \|D\varphi^{-t}|_{E^s}\| \leq Ce^{\tau t}.
\end{aligned}
\end{equation}

By the standard theory of hyperbolic dynamical systems, it is known that the unstable and stable bundles $E^u, E^s$ are integrable with tangent foliations $W^u$ and $W^s$ with continuously $C^\infty$ leaves. The leaves $W^u$ and $W^s$ when lifted to the universal cover agree with the unstable and stable foliations described in the last section, hence their common notation. The center unstable and stable bundles $E^{cu,cs}:= E^{u,s} \oplus \R X_g$ have associated  foliations $W^{cu}$ and $W^{cs}$ with continuously $C^\infty$ leaves with the same properties.  

Finally, we recall that the lifted foliations $\wW^\bullet$ for a negatively curved space have \textit{global product structure}, namely, for any two leaves of $W^{cu, cs}(x)$ and $W^{s,u}(y)$, there exists a unique intersection point $W^{cu,cs}(x) \cap W^{s,u}(y)$ as long as $y \neq \i(x)$. 

\subsubsection{Regularity of the splitting, fiber-bunching, holonomies}  \label{ssec:hols} For an Anosov flow the bundles $E^{cu,cs}$ are in fact Hölder-continuous with Hölder constant determined by the constant $\tau$ in the inequalities (\ref{eq: contractrate}) . Moreover, it is well known that for geodesic flows, the constant $\tau$ is directly related to the pinching constant of sectional curvature. Precisely, it is a standard fact regarding the solutions to the Ricatti equation that when the pointwise sectional curvatures of $M$ are pinched in the interval $[-a, -1]$, then in fact $\tau = \sqrt{a}$.

Based on the considerations above, Hasselblatt proved a sharp result on the regularity of the stable and unstable foliations:

\begin{theorem}\cite{hass} \label{regularity} Let $(M,g)$ have sectional curvature pinched in the interval $[-a, -1]$. Then the splitting $TSM = E^u \oplus E^s \oplus \R X_g$ is of class $C^{2/\sqrt{a}-}$, where regularity $C^{r-}$ for $r > 0$ means regularity $C^{\lfloor r \rfloor, r - \lfloor r \rfloor - \eps}$ for all $\eps > 0$.
\end{theorem}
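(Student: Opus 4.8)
The plan is to realise the Anosov splitting through the matrix Riccati equation of Jacobi fields, and then to play off the rate at which finite-time approximants of its solution converge against the rate at which their derivatives in the base point grow. Fix $v \in SM$, let $\gamma_v$ be its geodesic, trivialise the normal bundle $\gamma_v'(t)^\perp$ along $\gamma_v$ by a parallel orthonormal frame, and let $\mathcal{R}_v(t)$ denote the curvature operator $\xi \mapsto R(\xi,\gamma_v')\gamma_v'$ read in this frame. Under the Sasaki horizontal/vertical splitting the contact plane $E^s_v \oplus E^u_v \subseteq T_v SM$ is identified with $\gamma_v'(0)^\perp \oplus \gamma_v'(0)^\perp$, and $E^s_v = \operatorname{graph}(U^s(v))$, $E^u_v = \operatorname{graph}(U^u(v))$, where $U^s(v)$ (resp. $U^u(v)$) is the stable (resp. unstable) solution of the Riccati equation $U' + U^2 + \mathcal{R}_v = 0$ along $\gamma_v$, distinguished by boundedness of the associated Jacobi fields as $t \to +\infty$ (resp. $-\infty$). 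Since this identification depends smoothly on $v$ and $\R X_g$ is a smooth line field, the regularity of the splitting is exactly the regularity of $v \mapsto U^s(v)$ and of $v \mapsto U^u(v)$, and by time reversal it suffices to treat $U^s$. Finally, under the pinching $-a \leq K \leq -1$ standard Riccati comparison gives that $U^s(v)$ is negative definite with eigenvalues in $[-\sqrt{a},-1]$ uniformly in $v$ (and $U^u(v)$ positive definite with eigenvalues in $[1,\sqrt{a}]$).

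Next I would write $U^s(v) = \lim_{T\to+\infty} U^s_T(v)$, where $U^s_T(v)$ is the value at time $0$ of the solution of the same Riccati equation along $\gamma_v$ with terminal value $-\mathrm{Id}$ at time $T$ (any fixed generic terminal value works; $-\mathrm{Id}$ is convenient because comparison keeps the solution uniformly negative definite and finite on $[0,T]$). This solution depends $C^\infty$-smoothly on $v$ — through the smooth geodesic flow $t \mapsto \varphi^t(v)$ and smooth parallel transport — so $v \mapsto U^s_T(v)$ is $C^\infty$. The two quantitative inputs are: \emph{(i) convergence}, $\|U^s_T(v) - U^s(v)\|_{C^0} \leq C e^{-2T}$; and \emph{(ii) derivative growth}, $\|U^s_T(v)\|_{C^k} \leq C_k e^{k\sqrt{a}\,T}$ for every $k$, with constants independent of $v$ and $T$. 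Estimate (i) is the delicate one. Because $K \leq -1$, Riccati comparison forces the solution with terminal value $-\mathrm{Id}$ to have eigenvalues $\leq -1$ throughout $[0,T]$; writing $U^s_T = -A$, $U^s = -B$ with $A,B \geq \mathrm{Id}$, the difference $D := U^s_T - U^s$ satisfies $D' = -(U^s_T D + D U^s) = AD + DB$, whence $\frac{d}{dt}\|D\|^2 \geq 4\|D\|^2$ for the Hilbert--Schmidt norm. Thus $\|D\|$ decays backward in time at rate at least $2$, and since its value at time $T$ is at most $1+\sqrt{a}$, we obtain $\|D\| \leq (1+\sqrt{a})\,e^{-2T}$ at time $0$. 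The constant $2$ here is the \emph{sum} of the two stable rates, and this doubling is precisely what makes the final exponent $2/\sqrt{a}$ rather than $1/\sqrt{a}$.

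For (ii), differentiate the Riccati equation $k$ times in $v$: $D_v^k U^s_T$ solves a linear equation with backward-contracting homogeneous part (rate $\geq 2$, as for $D$ above) and an inhomogeneity built from the lower-order terms $D_v^j U^s_T$ ($j<k$) and from $D_v^j \mathcal{R}_v$. Perturbing $v$ displaces $\gamma_v(t)$ and moves the parallel frame by at most $O(e^{\sqrt{a}\,t})$ — this is the Anosov upper bound $\|D\varphi^t\| \leq C e^{\sqrt{a}\,t}$, i.e. $\tau = \sqrt{a}$, via Rauch comparison — so $\|D_v^j \mathcal{R}_v(t)\| \leq C_j e^{j\sqrt{a}\,t}$, and an induction integrating the inhomogeneity against the Green's function of the homogeneous equation gives $\|D_v^k U^s_T(v)\| \leq C_k e^{k\sqrt{a}\,T}$. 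With (i) and (ii) in hand I would conclude by a telescoping/interpolation argument: writing $U^s = U^s_0 + \sum_{T \geq 0}(U^s_{T+1} - U^s_T)$, estimates (i)--(ii) give $\|U^s_{T+1} - U^s_T\|_{C^0} \leq C e^{-2T}$ and $\|U^s_{T+1} - U^s_T\|_{C^k} \leq C_k e^{k\sqrt{a}\,T}$, so the Hadamard convexity inequality $\|f\|_{C^r} \lesssim \|f\|_{C^0}^{1-r/k}\|f\|_{C^k}^{r/k}$ yields $\|U^s_{T+1} - U^s_T\|_{C^r} \lesssim e^{(-2(1-r/k)+r\sqrt{a})T}$, which is summable whenever $r < 2/(\sqrt{a}+2/k)$. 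Letting $k \to \infty$, the series converges in $C^r$ (in the Hölder scale) for every $r < 2/\sqrt{a}$, so $U^s$ — and with it $U^u$ and the splitting $TSM = E^u \oplus E^s \oplus \R X_g$ — is of class $C^{2/\sqrt{a}-}$.

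The main obstacle is the sharpness and mutual compatibility of the two exponential rates inside the nonlinear Riccati hierarchy: one must ensure the convergence in (i) genuinely occurs at the doubled rate $2$ — not merely $1$ — uniformly over all geodesics, and that the cascade of variational equations leaks no extra exponential growth beyond $e^{k\sqrt{a}\,t}$, in particular that the quadratic cross-terms $D_v^i U^s_T \cdot D_v^j U^s_T$ ($i+j=k$) in the $k$-th equation are absorbed by that same bound. Carrying the Gronwall and Riccati-comparison constants uniformly over the compact base $SM$ is the technical heart; the rest is the standard normally-hyperbolic invariant-section philosophy made concrete in this very explicit ODE setting.
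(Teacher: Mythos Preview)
The paper does not prove this statement at all: Theorem~\ref{regularity} is quoted verbatim from Hasselblatt \cite{hass} as a black box and is only used (via its corollary that $\tfrac14$-pinching gives a $C^{1,\alpha}$ splitting) to justify the $1$-bunching needed for the Kanai connection and the cocycle holonomies. There is therefore no ``paper's own proof'' to compare your proposal against.

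That said, your outline is a reasonable sketch of the Hasselblatt argument specialised to geodesic flows, where the Riccati equation makes the normally-hyperbolic invariant-section machinery concrete. The two rates you identify --- convergence of the finite-time approximants at rate $2$ (the sum of the slowest stable contractions, coming from $K\le -1$) and growth of the $k$-th transverse derivative at rate $k\sqrt a$ (from $\tau=\sqrt a$) --- are exactly the ingredients, and the interpolation step is the standard way to convert them into $C^{2/\sqrt a -}$ regularity. The places you flag as delicate (uniformity of the doubled convergence rate, control of the quadratic cross-terms in the variational hierarchy) are indeed where the work lies in \cite{hass}; your proposal is a faithful roadmap but not a substitute for those estimates. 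For the purposes of the present paper, however, none of this is needed: simply cite \cite{hass}.
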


The next related definition is of a fiber bunched system. Recall that a cocycle over a flow $\varphi^t:X \to X$ on a smooth manifold $X$ is a map  $\mathcal{A}: \mathcal{E} \times \R \to \mathcal{E}$ where $\cE$ is a fiber bundle over $X$ such that if $\pi: \cE \to X$ is the canonical projection we have $\pi \circ \cA^t = \varphi^t \circ \pi$ where $\cA^t := \cA(\cdot, t)$. 

\begin{definition}[Fiber Bunching] \label{fbunchdef} Let $X$ a smooth closed manifold and let $\cE$ be a $\beta$-Hölder continuous subbundle of $TX$. A $\beta$-Hölder continuous cocycle $\mathcal{A}: \mathcal{E} \times \R \to \mathcal{E}$ over an Anosov flow $\varphi^t:X \to X$ is said to be $\alpha$-fiber bunched if $\alpha \leq \beta$ and there exists $T> 0$ such that for all $p \in M$ and $t \geq T$:
	$$\|A^t_p\| \|A_p^{-t}\| \| D\varphi^t|_{E^s}\|^{\alpha} < 1, \hspace{.3cm} \|A^t_p\| \|A_p^{-t}\| \| D\varphi^{-t}|_{E^u}\|^{\alpha} < 1.$$
	
	When the inequalities are satisfied by $\cA = D\varphi^t|_{E^{u}}$ and $\cA = D\varphi^t|_{E^{s}}$ the Anosov flow itself is said to be $\alpha$-bunched.
\end{definition}

Besides regularity of the Anosov splitting, the other main reason we restrict ourselves to fiber-bunched systems is the existence of \textit{holonomies}. For a fiber bundle as in Definition \ref{fbunchdef} and for $x, y \in X$ sufficiently close let $I_{xy}: \cE_x \to \cE_y$ be the identification obtained by parallel transporting (with respect to any fixed smooth Riemannian metric on $X$) $\cE_x \subseteq T_xX$ to $T_yX$ along the unique shortest geodesic from $x$ to $y$ and orthogonally projecting so that dependence of $I_{xy}$ on $x,y$ is also $\alpha$-Hölder continuous, and let $I_x:\cE_x \to \cE_x$ be the identity map. The following proposition is standard in this setting, see e.g. \cite[Proposition 2.3]{clark} :

\begin{proposition}[Cocycle holonomies] \label{holdef} Let $\cA$ be a cocycle as in Definition \ref{fbunchdef}. Then for each $x \in X$, $y \in W^{u}(x)$, there is a linear isomorphism $H^{u}_{xy}: \cE_x \to \cE_y$ with the properties:
	\begin{enumerate}
		\item [(a)] For $x \in M$ and $y,z \in W^{u}(x)$ we have $H_{xx} = I_x$ and $H^{u}_{yz} \circ H^{u}_{xy} = H^{u}_{xz}$;
		\item [(b)] For $x \in M$ and $y\in W^{u}(x)$, $t \in \R$:
		\[H^{u}_{xy} = (A^t_y)^{-1} \circ H^{u}_{\varphi^t(x) \varphi^t(y)} \circ A^t_x.\]
        \item [(c)] There is an $r > 0$ and a constant $C > 0$ such that for $x \in M$ and $y \in W^{u}(x)$ with $d(x, y) \leq r$ we have:
\[   \|H^{u}_{xy} - I_{xy}\| \leq C d(x,y)^\alpha.\]
	\end{enumerate}

Moreover, the family of linear maps $H$ satisfying the above properties is unique. The analogous holonomies $H^{s}$ over $W^{s}$ exist, are unique, and satisfy analogous properties.
\end{proposition}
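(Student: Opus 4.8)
The plan is to follow the classical limiting construction of invariant holonomies for fiber-bunched cocycles — the standard argument referenced in the statement — specialized to a flow. The point is that $y\in W^u(x)$ forces $d(\varphi^{-t}x,\varphi^{-t}y)\to 0$ exponentially, so one should identify the fibers $\cE_{\varphi^{-t}x}$ and $\cE_{\varphi^{-t}y}$ far back in the orbit, where the basepoints are close, by the near-identity map $I_{\varphi^{-t}x,\varphi^{-t}y}$, and push this identification forward by the cocycle. Concretely, I would set, for $t\ge 0$ large enough that $I_{\varphi^{-t}x,\varphi^{-t}y}$ is defined (which happens uniformly along the leaf),
\[
\Phi^t_{xy}\;:=\;A^t_{\varphi^{-t}y}\circ I_{\varphi^{-t}x,\varphi^{-t}y}\circ A^{-t}_x\;\colon\;\cE_x\to\cE_y ,
\]
note $\Phi^0_{xy}=I_{xy}$ when $d(x,y)\le r$, and define $H^u_{xy}:=\lim_{t\to\infty}\Phi^t_{xy}$.

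The technical heart is the convergence of this limit together with property (c). I would telescope along a fixed time step $s\ge T$: using the cocycle relations to split $A^{t+s}_{\varphi^{-t-s}y}=A^t_{\varphi^{-t}y}\circ A^s_{\varphi^{-t-s}y}$ and $A^{-t-s}_x=A^{-s}_{\varphi^{-t}x}\circ A^{-t}_x$, and inserting $\mathrm{id}=A^{-s}_{\varphi^{-t}y}\circ A^s_{\varphi^{-t-s}y}$, one rewrites $\Phi^{t+s}_{xy}(\Phi^t_{xy})^{-1}-\mathrm{id}$ as a conjugate, by the single operator $A^t_{\varphi^{-t}y}$, of a map whose norm is $O\bigl(d(\varphi^{-t}x,\varphi^{-t}y)^\alpha\bigr)$ — the last estimate coming from the $\alpha$-Hölder continuity of the time-$s$ cocycle on the compact base $X$ together with the elementary near-cocycle estimates for the identifications $I_{\cdot,\cdot}$. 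This yields
\[
\bigl\|\Phi^{t+s}_{xy}(\Phi^t_{xy})^{-1}-\mathrm{id}\bigr\|\;\le\;C\,\|A^t_{\varphi^{-t}y}\|\,\|(A^t_{\varphi^{-t}y})^{-1}\|\;d(\varphi^{-t}x,\varphi^{-t}y)^\alpha .
\]
Now $d(\varphi^{-t}x,\varphi^{-t}y)\le C\|D\varphi^{-t}|_{E^u}\|\,d(x,y)$ along the unstable leaf, while \emph{iterating} the $\alpha$-fiber-bunching inequality of Definition \ref{fbunchdef} over the $\lfloor t/s\rfloor$ time-steps gives a bound $\|A^t_{\varphi^{-t}y}\|\,\|(A^t_{\varphi^{-t}y})^{-1}\|\le\theta^{\lfloor t/s\rfloor}\|D\varphi^{-t}|_{E^u}\|^{-\alpha}$ for some $\theta<1$; the two Jacobian factors cancel, leaving $\|\Phi^{t+s}_{xy}(\Phi^t_{xy})^{-1}-\mathrm{id}\|\le C\theta^{\lfloor t/s\rfloor}d(x,y)^\alpha$. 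Summing this telescoping series shows $\Phi^t_{xy}$ converges, gives (c) once $d(x,y)\le r$, and — as $H^u_{xy}$ is then $Cd(x,y)^\alpha$-close to the invertible $I_{xy}$ — shows $H^u_{xy}$ is an isomorphism for $d(x,y)$ small; for arbitrary $y\in W^u(x)$ invertibility follows from (a) by chaining along a path in the leaf.

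Granting the estimate, the rest is formal. $H_{xx}=I_x$ is immediate; passing to the limit in $\Phi^t_{yz}\circ\Phi^t_{xy}=A^t_{\varphi^{-t}z}\circ I_{\varphi^{-t}y,\varphi^{-t}z}\circ I_{\varphi^{-t}x,\varphi^{-t}y}\circ A^{-t}_x$ and using $\|I_{q,r}\circ I_{p,q}-I_{p,r}\|\to 0$ as the points coalesce gives $H^u_{yz}\circ H^u_{xy}=H^u_{xz}$, which is (a). For (b), substituting the definition of $H^u_{\varphi^tx,\varphi^ty}$ and collapsing $(A^t_y)^{-1}\circ A^s_{\varphi^{t-s}y}$ and $A^{-s}_{\varphi^tx}\circ A^t_x$ with the cocycle relations turns, after reindexing $s\rightsquigarrow s-t$, the limit defining $(A^t_y)^{-1}\circ H^u_{\varphi^tx,\varphi^ty}\circ A^t_x$ into the one defining $H^u_{xy}$, and this holds for all $t\in\R$. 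For uniqueness: if $\widehat H$ also satisfies (a)--(c), then iterating (b) gives $H^u_{xy}(\widehat H^u_{xy})^{-1}=A^t_{\varphi^{-t}y}\bigl[H^u_{\varphi^{-t}x,\varphi^{-t}y}(\widehat H^u_{\varphi^{-t}x,\varphi^{-t}y})^{-1}\bigr](A^t_{\varphi^{-t}y})^{-1}$; by (c) the bracket is $\mathrm{id}+O(d(\varphi^{-t}x,\varphi^{-t}y)^\alpha)$, and the same fiber-bunching control on the conjugation forces it to $\mathrm{id}$, so $H^u=\widehat H$. The stable holonomies $H^s$ are built by the mirror argument — iterating \emph{forward} in time, using that $W^s$ is contracted as $t\to+\infty$ together with the first fiber-bunching inequality — and satisfy the analogous properties.

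The main obstacle is making the telescoping estimate uniform: obtaining genuine geometric decay of $\|\Phi^{t+s}_{xy}-\Phi^t_{xy}\|$ (rather than mere boundedness) requires iterating the fiber-bunching inequality to produce the compounding factor $\theta^{\lfloor t/s\rfloor}$, and one must control the two distinct — but exponentially close — basepoints $\varphi^{-t}x$ and $\varphi^{-t}y$, which is precisely what the conjugation rewriting by the single operator $A^t_{\varphi^{-t}y}$ achieves. Everything else is bookkeeping with the cocycle identities.
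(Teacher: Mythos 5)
The paper does not prove this proposition — it cites it as standard, pointing to Butler \cite[Prop.\ 2.3]{clark}. Your construction ($\Phi^t_{xy}=A^t_{\varphi^{-t}y}\circ I_{\varphi^{-t}x,\varphi^{-t}y}\circ A^{-t}_x$, telescoping with iterated fiber-bunching to get geometric decay and hence convergence, then deriving (a)--(c) and uniqueness by passing cocycle identities to the limit) is precisely the standard argument used in that reference and its sources, specialized to flows, and the outline is correct.
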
 

For $M$ whose sectional curvatures are pinched in $(-4, -1]$ we may conclude by Theorem \ref{regularity} that the horospherical foliations are then $\Conea$ for some $\alpha \in (0,1)$ and moreover by the above we see that $D\varphi^t|_{E^{u,s}}$ is $1$-bunched, so the derivative cocycle restricted to $E^{u,s}$ admits holonomies as above. 

Since for any Anosov flow the foliations $W^u, W^c$ and  $W^s$ locally define a product structure, and by (b) the holonomies are equivariant with respect to the cocycle $D\varphi^t$, the holonomies may be extended equivariantly with respect to $D\varphi^t$ to the connected component of $W^{cu}_{loc}(p):= W^{cu}(p) \cap B$, where $B$ is a  small neighborhood containing $p$. With the additional global product structure of the foliations for the geodesic flow explained in the previous section, by flowing back to the box $B$ for our case the cocycle holonomies may be extended to entire center-unstable leaves. We write $H^{cu, cs}$ for the cocycle holonomies over the leaves of $\wt{W}^{cu, cs}$.

\subsubsection{The Kanai connection} \label{sssec: kanai} Recall that $\wt{V}:= S\wt{M}$. For this section, we assume crucially that the horospherical foliations are $C^1$, or what is equivalent, the splitting $T\wt{V} = E^s \oplus E^u \oplus \R X_g$ is $C^1$. In the study of such geodesic flows, a key tool for the study of rigidity relating the geometry of the horospherical foliations and the dynamics of the geodesic flow is the \textit{the Kanai connection}. 

To motivate its definition, consider the symmetric bilinear tensor on $SM$ known as the Cartan-Kanai form:
$$h_{KC}(v,w) = \omega(v, Iw) + \lambda \otimes \lambda(v,w), $$
where $\lambda$ is as in Subsection \ref{ssec: geodesic}, $\omega$ is a symplectic form on $TM$ given by $\omega = d\lambda$ and $I$ is a $(1,1)$-tensor defined by $I(v_u) = v_u$ for $v_u \in E^u$, $I(v_s) = -v_s$ for $v_s \in E^s$ and $I(X_g) = 0$. The tensor $h_{KC}$ is in fact a pseudo-Riemannian metric  and it is of regularity as high as that of the bundles $E^u$ and $E^s$, which in our case are assumed to be $C^1$. 

Since $h_{KC}$ is at least $C^1$, the pseudo-Riemannian metric defines a connection via Koszul's formula with respect to which it is parallel -- this is the Kanai connection, whose properties we list below. For the following, observe that the Lie bracket of $C^1$ vector fields, and hence the torsion of a $C^0$-connection (c.f. Subsection \ref{ssec: conn}) such as $\nabla$, is well defined in local coordinates.

\begin{proposition} \cite[Theorem 2.6]{mp} \label{kanai}
	There exists a continuous connection $\nabla$, called the Kanai connection, on $T\wt{V}$ with the following properties:
	\begin{enumerate}
		\item [(a)] $h_{KC}$ as defined in above is parallel with respect to $\nabla$, i.e. $\nabla h_{KC} = 0$ and $\nabla$ has torsion $\omega \otimes X_g$;
		\item [(b)] $\nabla$ is $\Gamma$-invariant, $\varphi^t$-invariant, $\nabla \omega = 0$, $\nabla_{X_g} = \mathcal{L}_{X_g}$ and $\nabla X_g = 0$;
		\item [(c)] The Anosov splitting is invariant under $\nabla$, that is, if $X_s \in \Gamma(E^s)$, $X_u \in \Gamma(E^u)$ and $Y$ is any vector field on $\wt{V}$ then 
		$$ \nabla_Y X_s \in \Gamma(E^s), \hspace{1cm} \nabla_Y X_u \in \Gamma(E^u);$$
		\item [(d)] The restriction of $\nabla$ to the leaves of the foliations $\wW^u$ (resp. $\wW^s$) of $S\wt{M}$ is continuously $C^\infty$ along $\wW^u$ (resp. $\wW^s$), in the sense of Subsection \ref{ssec: conn}.  It is moreover flat, complete, and the parallel transport determined by it coincides with the holonomy transport determined by the stable and unstable foliations and with $H$ as in Proposition \ref{holdef} by the uniqueness part of that proposition.
	\end{enumerate}
\end{proposition}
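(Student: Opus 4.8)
The statement is \cite[Theorem 2.6]{mp}; here is how I would organize its proof. The first step is to produce $\nabla$ from $h_{KC}$ by a Koszul-type formula with prescribed torsion. Since $\omega$ is a $2$-form, $\omega\otimes X_g$ is a $(1,2)$-tensor skew-symmetric in its two covariant slots and hence admissible as the torsion of an affine connection; and for any nondegenerate symmetric $2$-tensor $h$ of class $C^1$ and any such skew $(1,2)$-tensor $\mathcal{T}$, the Koszul identity corrected by the contorsion terms determined by $\mathcal{T}$ produces a \emph{unique} affine connection with $\nabla h=0$ and torsion $\mathcal{T}$. I would take $h=h_{KC}$ and $\mathcal{T}=\omega\otimes X_g$: this defines the Kanai connection and proves (a), and since the formula involves only first derivatives of $h_{KC}$ (and of a local frame), $\nabla$ is continuous on $\wt V$. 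The uniqueness half of this construction is the workhorse for everything that follows.

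For (b), note that any diffeomorphism $\Phi$ of $\wt V$ preserving $h_{KC}$, $\omega$ and $X_g$ carries $\nabla$ to a metric connection for $h_{KC}$ with torsion $\omega\otimes X_g$, hence $\Phi_*\nabla=\nabla$. Deck transformations are isometries of $g$ that commute with $\varphi^t$, and the flow $\varphi^t$ itself satisfies $\varphi^t_*\lambda=\lambda$ and preserves $E^{u,s}$; both therefore preserve $\lambda$, $\omega=d\lambda$, $X_g$, $I$ and thus $h_{KC}$, giving $\Gamma$- and $\varphi^t$-invariance. From $\lambda(X_g)\equiv 1$ on $S\wt M$ one gets $\iota_{X_g}\omega=-d(\lambda(X_g))=0$, and then $\nabla_{X_g}=\mathcal{L}_{X_g}$ and $\nabla X_g=0$ follow from the torsion identity together with $\varphi^t$-invariance (equivalently, $\nabla$-parallel transport along orbits is $D\varphi^t$). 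Since $h_{KC}(X_g,\cdot)=\lambda$, this also yields $\nabla\lambda=0$; once (c) is established, $\nabla$ preserves $E^u,E^s,\R X_g$, so $\nabla I=0$ and hence $\nabla\omega=0$, completing (b).

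Statements (c) and (d) are where the cocycle holonomies of Proposition \ref{holdef} enter, and (d) is the step I expect to be the real obstacle. I would restrict $\nabla$ to a leaf $L$ of $\wt W^{cu}$: because $\omega$ vanishes on $E^{cu}$ the torsion of $\nabla|_L$ along $L$ vanishes, and I would show that $\nabla|_L$ preserves $E^{cu}|_L$, is flat, and has leafwise parallel transport equal to the extended center-unstable holonomy $H^{cu}$ of Subsection \ref{ssec:hols}. Flatness comes from the standard observation that the leafwise curvature is a $\varphi^t$-invariant tensor along the expanding direction, hence vanishes by the rates in (\ref{eq: contractrate}); and the identification with $H^{cu}$ then follows from the uniqueness in Proposition \ref{holdef} once one checks that leafwise $\nabla$-parallel transport satisfies the cocycle property, the equivariance $H_{xy}=(A^t_y)^{-1}H_{\varphi^t x\,\varphi^t y}A^t_x$, and the near-diagonal Hölder estimate. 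Granting this, $\nabla|_L$ is continuously $C^\infty$ along $\wt W^{cu}$ because that foliation has continuously $C^\infty$ leaves and $H$ is Hölder along them, it is complete, and it preserves $\R X_g$ (by $\nabla X_g=0$) and hence $E^u=E^{cu}\cap\ker\lambda$ (by $\nabla\lambda=0$); restricting to $\wt W^u\subset\wt W^{cu}$, and using $\nabla_{X_g}=\mathcal{L}_{X_g}$ in the flow direction, gives both (c) and (d). The difficulty is exactly that the Koszul formula only shows $\nabla$ is continuous on $\wt V$, so its leafwise smoothness, flatness and completeness are invisible from the construction and must be imported from the holonomy cocycle — which is available precisely because the $\frac14$-pinching makes $D\varphi^t|_{E^{u,s}}$ $1$-bunched, so that $H$ (and $H^{cu}$) exists and is unique.
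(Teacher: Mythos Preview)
Your Koszul-with-prescribed-torsion construction and the uniqueness arguments for (a)--(c) are essentially the content of \cite{mp}, and the paper defers all of those points there. Its own proof is a single paragraph on the completeness claim in (d), which it flags as ``the only non-standard claim not found in \cite{mp}.''

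That is exactly where your outline has a gap. You assert that completeness ``must be imported from the holonomy cocycle,'' but global existence of the parallel-transport maps $H_{xy}$ between any two points of a leaf is not the same as geodesic completeness of $\nabla|_L$: a flat torsion-free connection on a simply connected leaf $L\cong\R^{n-1}$ always has globally defined parallel transport, yet need not be complete (its affine developing map to $\R^{n-1}$ can fail to be onto). The paper's argument instead uses compactness and flow-invariance: by $\Gamma$-invariance $\nabla$ descends to the compact quotient $V$, where there is a uniform $c>0$ such that every $\nabla$-geodesic with initial velocity $Y^-\in E^s$ of norm $<c$ exists for time at least $1$; for arbitrary $Y^-\in E^s$ one picks $t>0$ with $\|D\varphi^t Y^-\|<c$ (contraction of $E^s$) and uses $\varphi^t$-invariance of $\nabla$ to pull the time-$1$ existence back, then lifts to $\wt V$. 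A smaller secondary point: you invoke ``leafwise curvature'' to prove flatness before knowing $\nabla|_L$ is better than $C^0$ along $L$, and only afterwards deduce leafwise smoothness from the holonomy identification; in \cite{mp} the order is the reverse --- leafwise $C^\infty$-regularity is checked directly from the construction, and only then is curvature well-defined and shown to vanish.
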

\begin{proof}
	The only non-standard claim not found in \cite{mp} is the completeness claim in item (d) which we prove now.
	
	Since $\nabla$ is  $\Gamma$-invariant, it descends to the quotient $V$; by compactness of $V$, there exists some constant $c > 0$ such that for $Y^-\in E^s$ with $\|Y^-\| < c$ the $\nabla$-geodesic in the direction of $Y^-$ is defined for at least time $1$. But in fact for any $Y^- \in E^s$, there exists a $t > 0$ such that $\|D\varphi^t(Y^-)\| < c$. Then the $\nabla$-geodesic issued from $D\varphi^t(Y^-)$ is well-defined for time at least 1, so by invariance of $\nabla$ by the geodesic flow, so is the $\nabla$-geodesic issued from $Y^-$. These geodesics lift from $V$ to $\wt{V}$.
\end{proof}

\begin{remarks}
	\begin{enumerate} \item [(i)] Since we will not use the Levi-Civita connection of the Sasaki metric on $\wt{V}$, we use $\nabla$ simply to denote the Kanai connection.
	\item [(ii)] The Kanai connection $\nabla$ in fact has Hölder-continuous regularity under $\quart$-pinching, but this will not be used.
	\end{enumerate}
\end{remarks}

Finally, we use all the constructions above to induce a $C^1$-structure on $\partial M$ and then recall some of its properties. 

Consider two leaves of the stable foliation $\wW^s(x)$ and $\wW^s(y)$ lifted to the universal cover $\wt{V}$ and let $\pi^s_y$ and $\pi^s_x$ be defined as in Subsection \ref{ssec: geodesic}. There is a well-defined map $$h^{cu}_{xy}: \wW^{s}(x)\setminus \wW^{cu}(\i(y)) \to \wW^{s}(y)\setminus \wW^{cu}(\i(x))$$ given by $z \mapsto \wW^{cu}(z) \cap \wW^s(y)$, which we call the (center-unstable) \textit{global holonomy} map. Observe that this agrees with $(\pi^s_y)^{-1} \circ \pi^s_x$ on its domain of definition and since $\wW^{cu}$ is $C^1$, $h^{cu}_{xy}$ is $C^1$ so $\pi^s_x$ and $\pi^s_y$ define a $C^1$-structure on $\partial M$ with transition map $(\pi^s_y)^{-1} \circ \pi^s_x = h^{cu}_{xy}$.

\section{Theorem \ref{lie}: Local Compactness of the Foliated Centralizer} \label{sec: compact}

We start with a straightforward proposition mentioned in the introduction proving that the (non-foliated) centralizer of the geodesic flow in the universal cover is \textit{not} a Lie group, and hence too large to be a rigid invariant of hyperbolic metrics.

\begin{proposition} \label{notcomp} Let $(\tilde{M},g)$ be a simply connected manifold with sectional curvatures $K <0$. The set of diffeomorphisms in $\Diff^\infty(S\wt{M})$ commuting with the geodesic flow is infinite dimensional in any of the weak $C^k$-topologies.
\end{proposition}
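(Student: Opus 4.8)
The plan is to localize perturbations along a single orbit of $\varphi^t$ on $S\wt{M}$, exploiting that $\wt{M}$ is a Hadamard manifold: every unit-speed geodesic satisfies $d_{\wt{M}}(\gamma(0),\gamma(t)) = |t|$, so every orbit of $\varphi^t$ is proper (it leaves every compact set as $t\to\pm\infty$) and non-periodic. Fix $v_0\in S\wt{M}$ and, by the flow box theorem, a codimension-one transversal $\Sigma_0\ni v_0$ and $\eps_0>0$ so that $\Phi_{\eps_0}(\sigma,t):=\varphi^t(\sigma)$ is a diffeomorphism of $\Sigma_0\times(-\eps_0,\eps_0)$ onto an open set. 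I claim that after shrinking $\Sigma_0$ to a sufficiently small transversal $\Sigma\ni v_0$, the full map $\Phi\colon\Sigma\times\R\to S\wt{M}$, $\Phi(\sigma,t)=\varphi^t(\sigma)$, is a diffeomorphism onto an open, flow-invariant set $\mathcal T$ on which $\varphi^t$ is translation in the $\R$-factor. That $\Phi$ is a local diffeomorphism onto open sets is immediate from $\Phi(\sigma,t)=\varphi^{t_0}\bigl(\Phi_{\eps_0}(\sigma,t-t_0)\bigr)$ near $(\sigma,t_0)$. Injectivity for small $\Sigma$ is a contradiction argument: otherwise, along transversals of diameter tending to $0$ one produces $\sigma_n,\sigma_n'\to v_0$ and $s_n\neq 0$ with $\varphi^{s_n}(\sigma_n)=\sigma_n'$; but $d_{\wt{M}}(\gamma(0),\gamma(t))=|t|$ excludes $s_n$ both unbounded and bounded away from $0$ (distances would blow up), $s_n\to s_\ast\neq 0$ would force the impossible periodic orbit $\varphi^{s_\ast}(v_0)=v_0$, and $s_n\to 0$ is ruled out by injectivity of $\Phi_{\eps_0}$ inside the flow box.

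With $\mathcal T\cong\Sigma\times\R$ in hand, for any compactly supported $\Psi_0\in\Diff^\infty(\Sigma)$ (so $\Psi_0=\id$ off some compact $K\subset\Sigma$) define $\Psi:=\Phi\circ(\Psi_0\times\id_\R)\circ\Phi^{-1}$ on $\mathcal T$ and $\Psi:=\id$ on $S\wt{M}\setminus\mathcal T$. The two definitions agree on $\mathcal T\setminus\Phi(K\times\R)$, and $\Phi(K\times\R)$ is closed in $S\wt{M}$: along a sequence $\varphi^{t_n}(\sigma_n)$ with $\sigma_n\in K$, using $d_{\wt{M}}(\gamma(0),\gamma(t))=|t|$ again, the $t_n$ must stay bounded (else the points escape every compact set), after which a subsequence converges inside $\Phi(K\times\R)$. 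Hence $\Psi$ is a well-defined smooth diffeomorphism of $S\wt{M}$, and since $\varphi^t$ is translation in the $\R$-factor of $\mathcal T$, it commutes with $\varphi^t$.

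Finally, $\Psi_0\mapsto\Psi$ is an injective homomorphism from $\Diff^\infty_c(\Sigma)$ into the centralizer, continuous in the weak $C^k$ topologies. Concretely, picking linearly independent compactly supported vector fields $Y_1,\dots,Y_N$ on $\Sigma$ and pushing them forward by $\Phi$, extended by $0$ (the same closedness argument, with $K=\bigcup_i\operatorname{supp}Y_i$, makes this smooth), yields $N$ commuting \emph{complete} vector fields on $S\wt{M}$, each commuting with $X_g$, whose flows lie in the centralizer and trace out an $N$-dimensional family; since $\dim\Sigma=2n-2\geq 2$ (negative curvature forces $n\geq 2$) and $N$ is arbitrary, the centralizer cannot be a finite-dimensional Lie group and is infinite dimensional in every weak $C^k$ topology. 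The main obstacle is the first step: promoting a local flow-box perturbation to a genuine global diffeomorphism requires the flow tube to be embedded and its compact core to be closed in $S\wt{M}$, and both rest on the Cartan--Hadamard fact that geodesics of $\wt{M}$ are globally minimizing (no orbit returns, no periodic orbits, proper orbits).
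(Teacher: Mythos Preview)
Your proof is correct and follows essentially the same approach as the paper: build an embedded flow tube $\Sigma\times\R$ around a single orbit and push forward compactly supported perturbations of the transversal to obtain an infinite-dimensional family of centralizer elements. The only notable difference is the choice of transversal: the paper takes $\Sigma$ inside the restriction of $S\wt M$ to a horosphere through the basepoint of $v_0$ (so that non-return of nearby orbits to $\Sigma$ is argued via the geometry of horospheres), whereas you take an abstract flow-box transversal and verify injectivity of $\Phi$ and closedness of $\Phi(K\times\R)$ directly from the Cartan--Hadamard facts that geodesics are globally minimizing and non-periodic. Your version is in fact a bit more careful on the extension step (closedness of the compact core of the tube), which the paper leaves implicit; conversely, the paper's horosphere transversal makes the non-return statement more geometrically transparent. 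Either way, the core idea is identical.
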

\begin{proof}
	
	Fix some $v_0 \in S\wt{M}$. Let $S_0$ be the $C^\infty$ submanifold of codimension 1 in $S\wt{M}$ obtained by restricting the bundle $S\wt{M}$ to the horosphere $B_{(v_0)_+}(p_0) \subseteq \wt{M}$, where $p_0$ is the basepoint of $v_0$. Let $S \subseteq S_0$ be a small precompact neighborhood of $v_0$ in $S_0$. Observe that $S$ has codimension $1$ in $S\wt{M}$ and is transverse to $X_g$, the geodesic spray. 
	
	Let $\Phi: S \times \R \to S\wt{M}$ be given by $\Phi(v, t) = \varphi^t(v)$. Then as long as $S$ is chosen sufficiently small, $\Phi$ is a diffeomorphism onto its image since for $v \in S$ its trajectory $\varphi^t(v)$ will not intersect $S \subseteq B_{(v_0)_+}(p_0)$ again. Moreover, in $\Phi$-coordinates, $X_g$ is given simply by $0 \times \partial/\partial t$. 
	
	Hence, any vector field $Z$ on $\Phi(S \times \R)$ of the form $Z = Y \times 0$ (in $\Phi$-coordinates), where $Y$ is a compactly supported vector field on $S$, will commute with $X_g = 0 \times \partial/\partial t$ and can be extended to all of $S\wt{M}$. The Lie algebra of such vector fields is evidently infinite-dimensional, so the centralizer of $X_g$ on $S\wt{M}$ is not finite-dimensional in any of the weak $C^k$-topologies.
\end{proof}

 Now we move to the proof of Theorem \ref{lie}, regarding the finite dimensionality of the group $G$ under the assumptions of Theorem \ref{main}.

\begin{proof}[Proof of Theorem \ref{lie}]

We start by constructing normal coordinates on the unstable/stable leaves with respect to which the elements of $G$ have a simple (affine) representation.

\begin{proposition} \label{normalforms} There exists a family of $C^\infty$ diffeomorphisms $\{\cH^u_x\}_{x \in \wt{V}}$, with $\cH^u_x: T_x \wW^u(x) \to \wW^u(x)$ such that:
	\begin{enumerate}
		\item [(a)] $\cH^u_x(0) = x$ and $D_x\cH^u_x$ is the identity map for each $x  \in \wt{V}$;
		\item [(b)] The maps $\cH_x^u$ depend continuously $C^\infty$ on $x$ along $W^u$ and smoothly on $x$ restricted to a leaf of $W^u$;
		\item [(c)] For $\phi \in G$ and any $x \in \wt{V}$, we have:
		$$\phi|_{\wW^u(x)} = \cH^u_{\phi(x)} \circ (D_x\phi)|_{E^u(x)} \circ (\cH_x^u)^{-1}. $$
	\end{enumerate}
	The analogous propositions hold for the foliations $\wW^{cu}$, $\wW^s$, and we denote the coordinates, respectively as $\cH^{cu}$ and $\cH^s$.
\end{proposition}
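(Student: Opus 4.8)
The plan is to build the maps $\cH^u_x$ as the exponential maps of the Kanai connection restricted to the unstable leaves, exploiting the properties recorded in Proposition \ref{kanai}. Concretely, for each $x \in \wt V$ let $\nabla^u_x$ denote the restriction of the Kanai connection $\nabla$ to the single leaf $\wW^u(x)$. By Proposition \ref{kanai}(d) this connection is continuously $C^\infty$ along $\wW^u$ (in the sense of Subsection \ref{ssec: conn}), flat, torsion-free (the torsion $\omega\otimes X_g$ vanishes when both arguments lie in $E^u$, since $\omega$ is the symplectic form pulled back from $T^*\wt M$ and $E^u$ is Lagrangian), and complete. Define $\cH^u_x := \exp^{\nabla^u_x}_x \colon T_x\wW^u(x) \to \wW^u(x)$.

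The verification then proceeds in three steps matching (a), (b), (c). For (a): completeness of $\nabla^u_x$ guarantees $\cH^u_x$ is defined on all of $T_x\wW^u(x)$; flatness plus simple connectivity of the leaf (unstable leaves of the geodesic flow on a negatively curved simply connected manifold are diffeomorphic to Euclidean space) makes $\exp^{\nabla^u_x}_x$ a global $C^\infty$ diffeomorphism, and $\cH^u_x(0)=x$, $D_0\cH^u_x = \mathrm{Id}$ are the standard properties of any affine exponential at the origin. For (b): the continuous-$C^\infty$-along-$\wW^u$ dependence of $\nabla^u_x$ on $x$ (Proposition \ref{kanai}(d)) transfers, via smooth dependence of solutions of the geodesic ODE on parameters and initial conditions, to the same dependence of $\cH^u_x$; and within a fixed leaf the smoothness of $\nabla^u_x$ makes $\cH^u_x$ smooth. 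For (c): the key observation is that any $\phi \in G$ preserves the Kanai connection. Indeed $\phi$ preserves $E^u$, $E^s$ and $X_g$ by definition, hence preserves $\lambda$ (since $\ker\lambda = E^u\oplus E^s$ and $d\phi(X_g)=X_g$ force $\phi_*\lambda=\lambda$) and therefore $\omega = d\lambda$ and the tensor $I$, so it preserves $h_{KC}$ and also the torsion $\omega\otimes X_g$; by the uniqueness clause of the Koszul-type construction of $\nabla$ in Proposition \ref{kanai}, $\phi_*\nabla = \nabla$, hence $\phi_*\nabla^u_x = \nabla^u_{\phi(x)}$. An affine diffeomorphism conjugates exponential maps: $\phi \circ \exp^{\nabla^u_x}_x = \exp^{\nabla^u_{\phi(x)}}_{\phi(x)} \circ D_x\phi$ on $T_x\wW^u(x)$, which is exactly $\phi|_{\wW^u(x)} = \cH^u_{\phi(x)} \circ (D_x\phi)|_{E^u(x)} \circ (\cH^u_x)^{-1}$.

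For the $\wW^s$ statement one applies the flip map $\i\colon \wt V \to \wt V$, which exchanges $E^u$ and $E^s$ and (up to sign/reversal) intertwines the relevant structures, so that $\cH^s_x := \i \circ \cH^u_{\i(x)} \circ D\i$ works; the needed equivariance of $\nabla$, $\omega$, $\lambda$ under $\i$ is standard. For $\wW^{cu}$ one uses instead the $\varphi^t$-equivariance of all the data (Proposition \ref{kanai}(b)): define $\cH^{cu}_x$ by declaring $\cH^{cu}_x(w) = \varphi^t\big(\cH^u_{\varphi^{-t}(x)'}(\cdots)\big)$ in the unique way compatible with the local product structure $W^{cu} = W^u \times \R X_g$, i.e. transport the $\wW^u$-coordinates along the flow direction; (a) and (b) are immediate from the construction, and (c) follows because every $\phi \in G$ commutes with $\varphi^t$.

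The step I expect to require the most care is (b) — making precise that "continuously $C^\infty$ along $\wW^u$" for the connection $\nabla^u$ passes to the exponential maps $\cH^u_x$ in the same sense. This is where one must be attentive in the non-compact setting, since the dependence is only continuous (not uniformly continuous) transversally; but it reduces to the statement that the flow of the geodesic vector field of $\nabla^u$ on the (leafwise) tangent bundle depends continuously on the connection coefficients in the $C^\infty$ topology on compact sets, which is a standard consequence of smooth dependence of ODE solutions on parameters once one fixes a foliation chart. The algebraic content — that $G$ acts affinely, i.e. step (c) — is essentially forced by the uniqueness of the Kanai connection and is the conceptual heart, but it is not where the technical difficulty lies. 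I should also remark, as is standard, that this normal-form construction is a variant of the Guysinsky--Katok normal forms, with the geometric realization via an invariant flat connection due to Feres.
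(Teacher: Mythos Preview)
Your proposal is correct and follows essentially the same route as the paper: define $\cH^u_x$ as the exponential map of the Kanai connection restricted to the unstable leaf, invoke Proposition~\ref{kanai}(d) for completeness, flatness, torsion-freeness and continuous-$C^\infty$ dependence, establish (c) by showing every $\phi\in G$ preserves $\nabla$ via the uniqueness clause, and handle $\wW^s$ and $\wW^{cu}$ by the flip $\i$ and $\varphi^t$-equivariance respectively. The paper phrases the diffeomorphism property slightly differently (observing that $\nabla^u_x$ is the Levi-Civita connection of a complete flat Riemannian metric on the leaf), but this is equivalent to your flatness-plus-simple-connectivity argument; your commentary on (b) and the Guysinsky--Katok/Feres remark also mirror the paper's own emphasis.
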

\begin{proof}
	
	By definition any element $\phi \in G$ preserves the bundles $E^u$, $E^s$ and since $\ker{\lambda} = E^u \oplus E^s$ as well as (by definition) $d\phi(X_g) = X_g$ it follows that $\phi_* \lambda = \lambda$.  Thus since $\phi$ preserves $h_{KC}, \omega$ and $X_g$, the Kanai connection, being the unique connection with respect to which $h_{KC}$ is parallel with torsion $\omega \otimes X_g$, is invariant by the action of $\phi \in G$. 
	
	Recall that by Proposition \ref{kanai} (a), (d), the connection $\nabla^u_x$ obtained by restricting the Kanai connection to a single unstable leaf $\wW^u(x)$ is smooth, flat and torsion-free, so in particular it is the Levi-Civita connection of a smooth flat Riemannian metric on $\wW^u(x)$. By  Proposition \ref{kanai} (d) $\nabla^u_x$ is complete and thus so is the Riemannian metric. Then let $\cH^u_x: T_x \wW^u(x) \to \wW^u(x)$ be the exponential map of $\nabla^u_x$, which is a diffeomorphism $\nabla^u_x$ is the connection of a flat Riemannian metric. Moreover, $\cH^u_x$ is smooth on leaves of $\wW^u$ and continuously $C^\infty$ along leaves of $\wW^u$ by Proposition \ref{kanai} (d) again. Finally, for (c), we observe again that elements of $G$ preserve the Kanai connection and so $\phi_*\nabla^u_x = \nabla^u_{\phi(x)}$ from which the statement follows.
	
	The proof for the stable foliation follows using the time-reversing isomorphism $\i$ of $\wt{V}$, and the proof for the weak unstable foliation by equivariance of all the constructions above with respect to the geodesic flow $\varphi^t$. Precisely, one defines $\cH^{cu}$ in the unique $\varphi^t$-equivariant way $\cH^{cu}_x(y) = \varphi^t \circ \cH^u_x \circ \varphi^{-t}(y)$ where $t \in \R$ is such that $\varphi^{-t}(y) \in \wW^u(x)$. Then (a) and (b) are immediate, whereas (c) follows since elements $\phi \in G$ by definition commute with $\varphi^t$.
\end{proof}
\begin{remark}
	The construction above is similar to a well-known result first proved by Guysinsky and Katok in \cite{gk} constructing normal forms for uniformly contracted bundles. In the non-compact setting, this result does not apply so we use the geometric structure provided by $h_{KC}$, an idea which has also been introduced before in many works. 
\end{remark}

For $\psi \in G$, pick some arbitrary $p \in \wt{V}$, a compact neighborhood $K$ of $p$ and a bounded open set $U$ such that $\psi(K) \subseteq U$. Then $U_p := \{\phi \in G: \phi(K) \subseteq U \}$ is a neighborhood of $\psi$ in the compact open topology. We will show that the closure of $U_p$ is compact, which proves Theorem \ref{lie} by Theorem \ref{mz}.
	
The compact-open topology is second countable \cite[p. 35]{hir}, so it suffices to check sequential compactness of $\overline{U}_p$. Let $\phi_n$ be a sequence in $U_p$ and observe that by passing to a subsequence we may assume that $q_n : = \phi_n(p)$ converges to some $q \in \overline{U}$. Moreover, we claim that we may pass to a further subsequence so that $D_p\phi_n$ also converges to some linear isomorphism $A: T_p \wt{V} \to T_q \wt{V}$. 

To prove the claim, it suffices to show that there is some $C > 1$ such that $ C^{-1} \|v\| \leq \|D_p \phi_n (v) \| \leq C\|v\|$ for all $n$ and $v \in T_p \wt{V}$. This is evidently true for $v = X_g$. Further, if $ \|D_p \phi_n (v_n) \|/\|v_n\| \to \infty$ and $v_n \in E^u$,  by Proposition \ref{normalforms}, the normal form charts are continuously $C^\infty$ along $W^u$, this would contradict $\phi_n(K) \subseteq U$, since $U$ is a bounded set. The same holds for $v_n \in E^s$. Finally, if $ \|D_p \phi_n (v_n) \|/\|v_n\| \to 0$ and $v_n \in E^u$, since the $\phi_n$ preserve $\omega$ and $E^s$ and $E^u$ are transverse Lagrangian subspaces there must exist some $u_n \in E^s$ such that $ \|D_p \phi_n (u_n) \|/\|u_n\| \to \infty$, reducing to the previous case. Similarly there cannot exist $ \|D_p \phi_n (v_n) \|/\|v_n\| \to 0$ and $v_n \in E^s$, so the claim is proved.

Using $A$ and the normal form coordinates $\cH_x$ defined above we now construct a map $\phi_0 \in G$ which we show is the $C^0$-limit of $\{\phi_n\}_{n \in \N}$. Let $\psi^s: \wW^s(p) \to \wW^{s}(q)$ and $\psi^{cu}: \wW^{cu}(p) \to \wW^{cu}(q)$ be given by:
$ \psi^{\bullet} := \cH^\bullet_{\phi(x)} \circ A|_{E^\bullet(x)} \circ  (\cH_x^\bullet)^{-1}, \text{ where } \bullet \in \{cu, s\},$
so that  the maps $\phi_n|_{W^\bullet(p)}$ converge uniformly $C^\infty$ to  $\psi^\bullet$. To extend the above construction to an open set of $\wt{V}$, we use the product structure of the foliations on the universal cover. For $x \in \wt{V}$ there exists a $C^1$ diffeomorphism $[\cdot, \cdot]_x: \wW^{cu}(x) \times \wW^s(x) \to \wt{V} \setminus (\wW^{cu}(\i(x)) \cup \wW^s(\i(x)))$ called the \textit{Bowen bracket} given by $(y,z) \mapsto [y,z]_x := \wW^s(y) \cap \wW^{cu}(z).$
Define the diffeomorphism $\phi_0: \wt{V} \setminus (\wW^{cu}(\i(p)) \cup \wW^s(\i(p))) \to \wt{V} \setminus (\wW^{cu}(\i(q)) \cup \wW^s(\i(q)))$ by:
$$ \phi_0([y,z]_p) = [\psi^{cu}(y), \psi^{s}(z)]_q,$$
where in the equation above we use the fact that the Bowen bracket is a diffeomorphism  $\wW^{cu}(x) \times \wW^s(x) \to \wt{V} \setminus (\wW^{cu}(\i(x)) \cup \wW^s(\i(x)))$. Furthermore, since the foliations are $C^1$ the dependence of the bracket $[\cdot, \cdot]_x$ on $x$ is also $C^1$. Using the facts just mentioned along with the fact that elements of $G$ preseve the bracket (since they preserve the foliations $\wW^{cu,s}$) we see that $\phi_0$ agrees with the limit of $\phi_n$, that is: as maps on $(y,z) \in \wW^{cu}(p) \times \wW^s(p) \cong \wt{V} \setminus (\wW^{cu}(\i(p)) \cup \wW^s(\i(p)))$, we have the following uniform $C^1$-convergence on compact sets
$$[\phi_n(y), \phi_n(z)]_{q_n} \to [\psi^{cu}(y), \psi^s(z)]_q = \phi_0([y,z]_p).$$

Now we verify that $\phi_0$ is uniformly $C^\infty$ along $W^{cu}$-leaves on the open set $\wt{V} \setminus (\wW^{cu}(\i(p)) \cup \wW^s(\i(p)))$. Fix some $x \in \wt{V} \setminus (\wW^{cu}(\i(p)) \cup \wW^s(\i(p)))$. Then by the convergence in the previous paragraph, we have that $(\phi_n)|_{W^{cu}(x)} \to (\phi_0)|_{W^{cu}(x)}$ uniformly on compact sets. On the other hand, since $\phi_n \in G$, by Lemma \ref{normalforms}  the restrictions $(\phi_n)|_{W^{cu}(x)}$ are given by:
 	$$(\phi_n)|_{W^{cu}(x)} = \cH^{cu}_{\phi_n(x)} \circ (D_x\phi_n)|_{E^u(x)} \circ (\cH_x^{cu})^{-1}, $$
and hence their limit is given by $\cH^{cu}_{\phi_0(x)} \circ A' \circ (\cH_x^{cu})^{-1},$ where $A'$ is the limit of $(D_x\phi_n)|_{E^u(x)}$. Therefore $(\phi_0)|_{W^{cu}(x)} = \cH^{cu}_{\phi_0(x)} \circ A' \circ (\cH_x^{cu})^{-1}$, which is $C^\infty$, and uniformly so along leaves since the right-hand side expression clearly is. By the same argument, $\phi_0$ is uniformly $C^\infty$ along $W^s$-leaves. Hence we conclude by Journé's Lemma (Lemma \ref{journe}) that $\phi_0$ is $C^\infty$ on $\wt{V} \setminus (\wW^{cu}(\i(p)) \cup \wW^s(\i(p)))$.

Finally, it remains to extend $\phi_0$ to all of $\wt{V}$. To do this, simply repeat the same entire argument with $\i(p)$ in place of $p$ and then passing to a further subsequence finally constructs a $\phi$ such that $\phi_n \to \phi$ in the compact-open topology. 
\end{proof}

\section{Smoothness of Foliations: Proof of Theorem \ref{main}} \label{sec:smooth}

 From now on, we assume that $\wt{V}$ is the universal cover of the unit tangent bundle of a manifold satisfying the hypotheses of Theorem \ref{main}.
 

By Theorem \ref{lie}, $G$ is a Lie group and we let its Lie algebra be denoted by $\mathfrak{g}$. We identify $\lieg$ with the space of vector fields generating the action of $G_0$, the connected component of the identity of $G$. With this identification, for $x \in \wt{V}$ we write $\lieg_x\subseteq T_x \wt{V}$ for the subspace spanned by the vector fields in $\mathfrak{g}$ evaluated at $x$. The orbits $G_0 x$ are immersed submanifolds partitioning $\wt{V}$ which are tangent to $\lieg_x$. A key observation is that while the vector fields in $\g$ themselves are not invariant by the action of $\Gamma$ on $\wt{V}$, the subspaces $\g_x$ are.  

Observe that $x \mapsto \dim \lieg_x$ is a lower semi-continuous function, since $$\dim \lieg_x = \dim G - \dim \ker (D_{id}\theta_x),$$ where $\theta_x(\psi) := \psi(x)$ for $\psi \in G$ and $x \in S\wt{M}$, and $(\psi,x) \mapsto \theta_x(\psi)$ is $C^\infty$ so that $ \dim \ker (D_{id}\theta_x)$ is upper semi-continuous in $x$. 

Let $d_0 = \max_{x \in \wt{V}} \dim \lieg_x$. Then by lower semi-continuity $$\cU = \{x \in \wt{V}: \dim \lieg_{x} = d_0\}$$ is an open, non-empty subset of $\wt{V}$. Let $\g^u$, $\g^c$ and $\g^s$ be the images of $\lieg$ under the projections to $E^u$, $\R X_g$ and $E^s$, respectively and again observe that $x \mapsto \dim \lieg^{u,s}_x$ are lower semi-continuous functions on $\cU$. Let $d_{u, s} := \max_{x \in \cU} \dim \lieg_{x}^{u,s}$, that is, the maximum transverse dimension of an orbit of a leaf of $W^{cu, cs}$ under the action of $G_0$. By lower semi-continuity again, the sets $$\cU^{u,s} = \{x \in \cU: \dim \lieg_{x}^{u,s} = d_{u,s}\}$$ are open and by definition non-empty. Observe that $\cU^{u,s}$ are both $\Gamma$ and $\varphi^t$-invariant so that they are dense on $\wt{V}$ by topological transitivity of $\varphi^t$. 

\begin{lemma} \label{notcenter}
	Either $d_u$ or $d_{s}$ is  positive. 
\end{lemma}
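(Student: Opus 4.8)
The plan is to argue by contradiction: suppose $d_u = d_s = 0$, and deduce that $G_0 = \langle\wt{\varphi}^t\rangle$, which contradicts the standing assumption (from the hypothesis of Theorem \ref{main}) that $G_c$ is non-discrete.

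First I would translate $d_u = d_s = 0$ into a pointwise statement about $\lieg$. By definition these say $\dim\lieg^u_x = \dim\lieg^s_x = 0$ for every $x\in\cU$, i.e. $\lieg_x \subseteq (E^u(x)\oplus\R X_g(x))\cap(E^s(x)\oplus\R X_g(x)) = \R X_g(x)$ on the dense set $\cU$. For a fixed $Y\in\lieg$ the $2$-vector $Y\wedge X_g$ then vanishes on a dense set, hence everywhere, and since $X_g$ is nowhere zero on $\wt V = S\wt M$ this forces $Y = f_Y X_g$ globally for a smooth function $f_Y$ (in particular $\dim\lieg_x = 1$ and $\cU = \wt V$). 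Consequently every orbit $G_0\cdot x$ is everywhere tangent to the line field $\R X_g$, hence lies inside the flow line through $x$; since the flow $t\mapsto\wt{\varphi}^t$ is a continuous path through the identity we have $\langle\wt{\varphi}^t\rangle\subseteq G_0$, so the orbit also contains that flow line and $G_0\cdot x = \wt{\varphi}^{\R}(x)$ for every $x$. In particular each $g\in G_0$ preserves every flow line, and since $g$ commutes with $\wt{\varphi}^t$ it acts on the flow line of $x$ as the translation $\wt{\varphi}^{c(x)}$ for a continuous function $c\colon\wt V\to\R$ that is constant along flow lines.

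The heart of the argument is to upgrade $c$ to a global constant using $g(\wW^{s,u}(v)) = \wW^{s,u}(g(v))$. Fix $g\in G_0$ and $x\in\wt V$. If $y\in\wW^s(x)$ then $g(y)\in\wW^s(g(x)) = \wt{\varphi}^{c(x)}(\wW^s(x))$, so $\wt{\varphi}^{c(y)}(y) = g(y)$ and $\wt{\varphi}^{c(x)}(y)$ both lie in $\wW^s(\wt{\varphi}^{c(x)}x)$; hence $\wt{\varphi}^{c(y)-c(x)}$ maps the point $\wt{\varphi}^{c(x)}(y)$ back into its own stable leaf. But on the Hadamard manifold $\wt M$ unit-speed geodesics are globally minimizing, so $d_{Sas}(\wt{\varphi}^t z,\wt{\varphi}^{t+s}z)\ge |s|$ for all $t$, which rules out $\wt{\varphi}^s(z)\in\wW^s(z)$ unless $s = 0$; therefore $c(y)=c(x)$. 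Running the same argument with $\wW^u$ (and $t\to-\infty$) shows $c$ is constant on unstable leaves, and it is constant along flow lines by construction. Finally, for any $x'\neq\i(x)$ the global product structure gives a single point $p\in\wW^{cu}(x)\cap\wW^s(x')$; since $\wW^{cu}(x)$ is swept out by the flow-translates $\wt{\varphi}^{t_0}(\wW^u(x))$ we may write $p=\wt{\varphi}^{t_0}(q)$ with $q\in\wW^u(x)$, whence $c(p)=c(q)=c(x)$ and then $c(x')=c(p)=c(x)$. As $\wt V\setminus\{\i(x)\}$ is dense and $c$ is continuous, $c$ is constant, so $g=\wt{\varphi}^{c}$ for some $c\in\R$.

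Since $g\in G_0$ was arbitrary, $G_0\subseteq\langle\wt{\varphi}^t\rangle$, hence $G_0 = \langle\wt{\varphi}^t\rangle$ and $G_c = G/\langle\wt{\varphi}^t\rangle = G/G_0 = \pi_0(G)$ is discrete — contradicting the hypothesis. Therefore $d_u>0$ or $d_s>0$. The one place demanding care is the last step of the previous paragraph, propagating constancy of $c$ across all of $\wt V$; I would handle it purely through the global product structure / Bowen bracket already set up in Section \ref{sec: prel} together with density and continuity, rather than chasing an explicit accessibility chain, and I would also verify that $c$ is genuinely continuous (which follows since the geodesic flow acts freely and smoothly, so $x\mapsto c(x)$ is recovered smoothly from $g$ via a local cross-section to the flow).
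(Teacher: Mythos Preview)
Your proof is correct, but it takes a genuinely different route from the paper's. The paper argues at the Lie-algebra level via the contact form: writing each $Y\in\lieg$ as $fX_g$, it observes that $Y$ must preserve $\lambda$ (since it preserves $X_g$ and $E^u\oplus E^s=\ker\lambda$), and computes $\mathcal L_{fX_g}\lambda = d(f\lambda(X_g)) + f\,i_{X_g}d\lambda = df$ using $i_{X_g}d\lambda=0$; hence $f$ is constant and $\lieg=\R X_g$. Your argument instead works at the group level: you use preservation of $\wW^{s,u}$ together with the fact that the flow has no nontrivial self-returns to stable leaves on a Hadamard manifold to show the displacement function $c$ is leafwise constant, and then invoke the global product structure to globalize. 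The paper's computation is a two-line application of Cartan's formula and uses only the contact geometry already in place; your approach avoids the contact form entirely but is longer and leans on the global product structure and the definition of $G$ as foliation-preserving. One small point: your condition ``$x'\neq\i(x)$'' should really read ``$(x')_+\neq x_-$'' (i.e.\ $\wW^s(x')\neq\wW^s(\i(x))$), but this does not affect the density argument.
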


\begin{proof} Suppose otherwise for contradiction. This implies that $\mathfrak{g}_x \subseteq \R X_g$ for all $x \in \wt{V}$, so any vector field in $\mathfrak{g}$ must fix the orbits of the geodesic flow on $\wt{V}$. Hence any vector field in $\mathfrak{g}$ must be of the form $fX_g$, where $f \in C^\infty(\wt{V})$ and $X_g$ is the geodesic spray. But notice that the vector field $f X_g \in \mathfrak{g}$ must preserve the contact form $\lambda$ on $\wt{V}$ since it preserves $X_g$ and $E^u \oplus E^s$. On the other hand:
$$\mathcal{L}_{fX_g} \lambda = d \circ i_{fX_g}\lambda + i_{X_g} d\lambda = df, $$
since $i_{X_g} d\lambda = 0$ and $\lambda(X_g) = 1$. Hence $\mathcal{L}_{fX_g} \lambda$  is zero if and only if $f$ is constant. Therefore, if $d_u$ and $d_s$ are both equal to $0$ the action of $G_0$ reduces to the action of the geodesic flow, which contradicts the hypothesis that the reduced centralizer $G_c = G/\langle \varphi^t \rangle$ is not discrete. 
\end{proof}

Suppose without loss of generality that $d_s > 0$. The first key step in the proof is to show that in fact $d_s = d$, which we prove in the following proposition. The idea is to to identify $\partial \wt{M}$ with the space of center unstable leaves and construct a $\Gamma$-invariant foliation of $\partial \wt{M}$ whose leaves are given by $G_0$-orbits of center unstable, i.e, $W^{cu}$-leaves. Then a classic argument using the north-south dynamics of the action of $\Gamma$ on $\partial \wt{M}$ in fact shows that this foliation must be trivial, that is, $d_s = d$. Since the action of $G_0$ is $C^\infty$ and commutes with the geodesic flow, this yields that $\wt{W}^{cu}$ itself is a $C^\infty$ foliation on an open and dense set of $V$.

\begin{proposition} \label{dsismax}
	For $d_s$ as above, $d_s = d$. In particular, on the set $\cU^s$ the action of $G_0$ on the space of unstable leaves is locally homogeneous and the foliation $\wt{W}^{cu}$ is $C^\infty$ on  $\cU^s$.
\end{proposition}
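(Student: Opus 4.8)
The plan is to reformulate ``$d_s=d$'' as triviality of a $\Gamma$-invariant foliation of $\partial\wt{M}$ and then run a north--south dynamics argument (following Hamenstadt), after which the ``in particular'' clause drops out from the existence of an open $G_0$-orbit.

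First I would set things up on the boundary. Since deck transformations commute with $\wt{\varphi}^t$ and preserve the horospherical foliations, $\Gamma=\pi_1(M)\subseteq G$; as $G_0\trianglelefteq G$, $\Gamma$ normalizes $G_0$, and from $(\gamma\phi\gamma^{-1})(\gamma x)=\gamma(\phi(x))$ one sees that $\gamma$ carries the $G_0$-orbit of $x$ to that of $\gamma x$ --- this is exactly why the subspaces $\g_x$, hence their projections $\g^s_x\subseteq E^s_x$, are $\Gamma$-equivariant. Both $G_0$ and $\Gamma$ preserve $\wW^{cu}$ and commute with $\wt{\varphi}^t$, so they act on the space of center-unstable leaves, which we identify with $\partial\wt{M}$ via $x\mapsto x_-$ and equip with the $C^1$-structure from Section \ref{sec: prel}. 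Under the induced identification $E^s_x\cong T_{x_-}\partial\wt{M}$, the $G_0$-orbit of $x_-$ is an immersed submanifold with tangent space $\g^s_x$; over the image $\Omega\subseteq\partial\wt{M}$ of $\cU^s$ --- a dense, open, $\Gamma$-invariant set --- these orbits have constant dimension $d_s$, hence are the leaves of a foliation $\cF$ of $\Omega$ with continuous tangent distribution, and $\cF$ is $\Gamma$-invariant since $\gamma$ permutes $G_0$-orbits. So it suffices to show that the tangent distribution of $\cF$ is all of $T\partial\wt{M}$ over $\Omega$, i.e. $d_s=d=n-1=\operatorname{rank}E^s$.

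The hard step is to rule out $0<d_s<d$. Assuming this, $\cF$ is a proper nontrivial $\Gamma$-invariant foliation. Pick a hyperbolic $\gamma\in\Gamma$ whose fixed-point pair $(\gamma^-,\gamma^+)$ lies in $\Omega\times\Omega$ (possible, since such pairs are dense in $(\partial\wt{M})^2$ and $\Omega$ is dense and open). Then $\gamma^n\to\gamma^+$ uniformly on compact subsets of $\partial\wt{M}\setminus\{\gamma^-\}$, the leaf of $\cF$ through $\gamma^+$ is $\gamma$-invariant, and $D_{\gamma^+}\gamma$ is a linear contraction preserving the tangent $d_s$-plane of $\cF$ at $\gamma^+$. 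The point of \emph{strict} $\quart$-pinching is that it forces $\tau<2$ in the Anosov estimates, whence the splitting --- and so the tangent distribution of $\cF$, via the $C^1$-structure on $\partial\wt{M}$ --- is $C^1$ (Theorem \ref{regularity}) and $D\wt{\varphi}^t|_{E^{u,s}}$ is $1$-bunched and admits holonomies (Proposition \ref{holdef} and the discussion in Subsection \ref{ssec:hols}); this is precisely the bound that fails for complex and quaternionic hyperbolic spaces, on whose boundaries a $\Gamma$-invariant proper distribution genuinely exists. Combining the invariance of the tangent plane of $\cF$ under the contraction $D_{\gamma^+}\gamma$ with the control on its distortion afforded by $\tau<2$, continuity of the distribution of $\cF$, density of hyperbolic fixed points, and minimality of $\Gamma$ on $\partial\wt{M}$, Hamenstadt's argument forces the leaf dimension to be $0$ or $d$, a contradiction; hence $d_s=d$. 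I expect this to be the main obstacle: the surrounding identifications are bookkeeping, but making the ``invariance $+$ bounded distortion $+$ dense fixed points'' mechanism precise is the genuinely substantive, Hamenstadt-type content.

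Finally, granting $d_s=d$, the ``in particular'' assertions follow. For $x\in\cU^s$ the differential at the identity of $\phi\mapsto\proj_{E^s}(\phi(x))$ is onto $E^s_x$, so the $G_0$-orbit of $x_-$ is open in $\partial\wt{M}$; thus $G_0$ acts locally transitively, i.e. locally homogeneously, on the space of (center-)unstable leaves over $\Omega$. For smoothness of $\wW^{cu}$ on $\cU^s$, take a $C^\infty$ submanifold $\Sigma\subseteq G_0$ through the identity of dimension $d=n-1$ whose tangent space is a complement to the kernel of that differential (e.g. $\Sigma=\exp(\mathfrak c)$ for $\mathfrak c\subseteq\g$ such a complement). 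Then $\Phi\colon\Sigma\times\wW^{cu}_{\mathrm{loc}}(x)\to\wt{V}$, $\Phi(\phi,y)=\phi(y)$, is $C^\infty$ --- it restricts the $C^\infty$ action map $G\times\wt{V}\to\wt{V}$ (Theorem \ref{mz} together with \cite[Thm. 4]{mont}) to a product of $C^\infty$ submanifolds --- and since $\dim\Sigma+\dim\wW^{cu}_{\mathrm{loc}}(x)=(n-1)+n=\dim\wt{V}$ while $D_{(\mathrm{id},x)}\Phi$ is surjective by the transversality, $\Phi$ is a diffeomorphism onto a neighborhood of $x$. Each slice $\Phi(\{\phi\}\times\wW^{cu}_{\mathrm{loc}}(x))=\phi(\wW^{cu}_{\mathrm{loc}}(x))$ is an open piece of the leaf $\wW^{cu}(\phi(x))$, distinct slices lie in distinct leaves (since $\phi\mapsto(\phi(x))_-$ is a local diffeomorphism on $\Sigma$), and every point near $x$ lies in such a slice; hence $\Phi$ is a $C^\infty$ foliation chart and $\wW^{cu}$ is $C^\infty$ on $\cU^s$.
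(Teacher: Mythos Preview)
Your route is correct but genuinely different from the paper's. The paper never passes to the $G_0$-action on $\partial\wt{M}$: it works entirely on $\wt{V}$, first proving a Claim that $\g^s$ is involutive so that it integrates to foliations $\cF_i$ on two stable leaves $\wW^s(x_0),\wW^s(x_1)$ (with $x_0,x_1\in\cU^s$ periodic, so that $\wW^{cs}(x_i)\subseteq\cU^s$), and then spending the bulk of the argument on a Lemma showing that the center-unstable holonomy $h^{cu}$ carries $\cD:=\g^s$ from one stable leaf to the other. That holonomy-invariance Lemma is where strict $\tfrac14$-pinching enters quantitatively: one bounds the distortion on the Grassmannian by $e^{(\tau-1)t}$ and plays it against the $e^{-t}$ contraction along $\wW^{cu}$, getting $e^{(\tau-2)t}\to 0$; a further wrinkle is that $\cD$ is not uniformly $C^1$ on the noncompact $\cU^s$, so the estimate is first carried out only for points backward-asymptotic to periodic orbits in $\cU^s$, then extended by density and continuity. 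Only after this does the paper descend to a $\Gamma$-invariant foliation of $\partial M$ and cite Hamenst\"adt. Your orbit argument shortcuts all of this: once one knows the $G_0$-action on $\partial\wt{M}$ is $C^1$ (which uses pinching only via the $C^1$-structure on $\partial\wt{M}$, i.e.\ Theorem~\ref{regularity}), the tangent to the $G_0$-orbit at $\xi$ computed in the chart $\wW^s(x)$ is exactly $\g^s_x$, and well-definedness of this tangent space across charts \emph{is} the holonomy invariance the paper labors to prove. What your approach buys is a cleaner, more conceptual argument; what the paper's buys is an explicit mechanism showing exactly how $\tau<2$ is consumed, and an argument that stays on $\wt{V}$ without having to check regularity of the induced boundary action.

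One small gap worth closing: you assert your foliation lives only on an open dense $\Omega\subseteq\partial\wt{M}$, and then invoke Hamenst\"adt, but the cited statement is for foliations of all of $\partial\wt{M}$. In fact $\Omega=\partial\wt{M}$: for any $\xi$, the $\wW^{cu}$-leaf over $\xi$ meets the open dense $\cU^s$, and for $x$ in that intersection your own computation gives orbit dimension $\dim\g^s_x=d_s$; so every orbit has dimension $d_s$. (The paper achieves the same thing by its choice of periodic $x_0,x_1$ with $\wW^{cs}(x_i)\subseteq\cU^s$, which makes the two stable-leaf charts cover $\partial M$.) With that noted, your sketch and the paper's both reduce to the same Hamenst\"adt citation, and your final paragraph on local homogeneity and the $C^\infty$ chart $\Phi$ matches the paper's concluding argument essentially verbatim.
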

\begin{proof}
 Fix some $x_0 \in \cU^s$ such that its $\varphi^t$-orbit in $V = \wt{V} / \Gamma$ is periodic. Since $\wW^{cs}(x_0)$ is contracted under $\varphi^t$, this implies that $\wW^{cs}(x_0)\subseteq \cU^s$ by $\Gamma$ and $\varphi^t$-invariance of $\cU^s$. Let $x_1 \in \cU^s$ be another $\varphi^t$-periodic orbit which moreover satisfies $\wW^{cs}(x_1) \neq \wW^{cs}(x_0)$, which exists since $\cU^s$ is open and non-empty. Thus $\g^s$ has constant dimension on $\wW^{s}(x_i)$, $i = 0,1$. Moreover, we have:
 
 \begin{claim}$\g^s$ is involutive, i.e. $[\g^s, \g^s] \subseteq \g^s$.
 	\end{claim} 
 \begin{proof} For a vector field $X$ on $\wt{V}$ we write $X = X^u + X^s + X^c$ for the decomposition corresponding to $T\wt{V} = E^u \oplus E^s \oplus E^c$, where $E^c = \R X_g$. Similarly, for a set $\h$ of vector fields on $\wt{V}$ we write $\h^u, \h^s$ and $\h^c$ the sets of vector fields obtained via this decomposition.  With this notation, to show that $\g^s$ is involutive it suffices to show that for $X,Y \in \g$ we have $[X^s, Y^s] \in \g^s$. For $X, Y \in \g$, expanding the Lie bracket with the decompositions for $X$ and $Y$ we have:
 	$$[X, Y]^s = [X^s, Y^s]^s + [X^c, Y]^s + [X, Y^c]^s $$
as the other Lie brackets have no $E^s$-component, since it is a standard fact for contact Anosov flows that $[E^s, E^u]$ is always parallel to $E^c$, and obviously we have $[E^u, E^u] \subseteq E^u$. Since $\g$ is tangent to orbits of $G_0$, it must be involutive so we have $[\g, \g]^s \subseteq \g^s$, and thus $[X,Y]^s \in \g^s$. Since clearly also   $ [X^s, Y^s] = [X^s, Y^s]^s$, to show that $ [X^s, Y^s] \in \g^s$ it suffices to show that  $[X^c, Y]^s$ and  $[X, Y^c]^s$ are also in $\g^s$.
 
We check that $[X^c, Y]^s \in \g^s$, the proof for  $[X, Y^c]^s$ being analogous. Since $E^c  = \R X_g$, we may write $X^c = f X_g$ for some function $f \in C^\infty(\wt{V})$. But now recall that by definition since $Y \in \g$, it commutes with $X_g$ i.e. $[Y, X_g] = 0$, so:
$$[X^c, Y]  = [fX_g, Y] = f(X_gY-YX_g) - Y(f) X_g = -Y(f)X_g,$$
so that we see in fact that $[X^c, Y]^s = 0$.

 \end{proof}

Hence $\g^s$ integrates to foliations $\cF_i$, $i =0,1$ of $\wW^{s}(x_i)$, $i = 0,1$. Let $h^{cu}: \wW^{s}(x_0)\setminus \wW^{cu}(\i(x_1)) \to \wW^{s}(x_1)\setminus \wW^{cu}(\i(x_0)) $ be the center-unstable holonomy map namely, $h^{cu}(z) = \wW^{cu}(z) \cap \wW^s(x_1)$ for $z \in \wW^{s}(x_0)\setminus \wW^{cu}(x_1)$. The following lemma which uses crucially the pinching hypothesis on the metric shows that $\cD := \g^s$ is invariant under the holonomy maps. The idea is that high regularity subbundles of pinched flows must in fact be holonomy invariant, but extra care has to be taken in a non-compact setting.

\begin{lemma} \label{holinv}The distribution $\cD$ over $\wW^s(x_0)$ is mapped to the distribution $\cD$ over $\wW^s(x_1)$ by the holonomies $h^{cu}$. Namely, for all $y \in \wW^s(x_0)\setminus \wW^{cu}(x_1)$,  $Dh^{cu}(\cD_y) = \cD_{h^{cu}(y)}$.
\end{lemma}
\begin{proof}

	Denote by $H^{cu}$ the cocycle holonomies of the bundle $E^s$ over leaves of $\wW^{cu}$ as given by Proposition \ref{holdef}, and extended to the center-unstable leaves by the remark following the Proposition. Note that $Dh^{cu}$ defines a family isomorphisms of $E^s$ over $\wW^{cu}$ which satisfies the conditions of Proposition \ref{holdef} so by uniqueness in the proposition we see that $Dh^{cu} = H^{cu}$, in their common domain of definition. We will now show invariance of $\cD$ under $H^{cu}$ between the two leaves in question.
	
	To deal with the lack of uniformity of $\cD$ in our non-compact setting we have to begin by verifying invariance for points negatively assymptotic to a periodic orbit in the quotient. Namely, let $y \in \wt{W}^{s}(x_0)$ be such that its negative endpoint $y^{-} \in \partial M$ is the endpoint of some $\varphi^t$-periodic orbit $w \in \wt{V}$, i.e., such that $w_- = y_-$ and $w$ is periodic in $V$ and we moreover assume that $w$ also lies in $\cU^s$, which is possible since $\cU^s$ is open and dense. Let $z = \wW^{cu}(y) \cap \wW^{s}(x_1)$, i.e. $z = h^{cu}_{x_0x_1}(y)$. 
\begin{claim} \label{holinvper} For $y,z$ as above $H^{cu}_{y,z} \cD_y = \cD_z$.
\end{claim}
\begin{proof}
	
		Recall that $d_0$ is the dimension of the distribution $\cD$ on $\cU^s$. Then $\cD$ defines a section of the Grassmanian bundle $\cP \to \cU^s$ with fibers $\cP_x$ isomorphic to Gr$(d_0, d)$ of $d_0$-dimensional subspaces of $E^s$. As is standard, we endow the fibers $\cP_x$ with the metric:
	$$\rho_x(V, W) = \sup_{v \in V, \|v\| = 1} \inf_{w \in W} \|v - w\|$$ 
	For this metric, given a linear isomorphism $A$ acting on the underlying vector space, the induced action of $A$ on the Grassmannian manifold is bi-Lipschitz with norm bounded by $\|A\|\|A^{-1}\|$. The action of $D\varphi^t$ on $T\wt{V}$ induces a natural action on $\cP$. By the pinching hypothesis, we obtain the following bound for $V, W \in \cP_x$:
	$$\begin{aligned}
		\rho_x(V, W) &\leq \|D_x\varphi^{-t}|_{E^s}\| \|D_{\varphi^{-t}(x)}\varphi^{t}|_{E^s}\| \, \rho_{\varphi^{-t}x}(D\varphi^{-t}V, D\varphi^{-t} W) \\
		& \leq C e^{-t} e^{\tau t}\, \rho_{\varphi^tx}(D\varphi^{-t}V, D\varphi^{-t} W)\\
		& \leq C e^{(\tau-1)t}\rho_{\varphi^tx}(D\varphi^{-t}V, D\varphi^{-t} W)
	\end{aligned}$$
	 for all $t >0 $, where $C$ is some constant which does not depend on any of the parameters and where $\tau = \sqrt{a}$ and $a < 4$ is such that the sectional curvatures of $M$ are pinched in $[-a, -1]$. 
	
	For $t \in \R$, we write $y_t = \varphi^t(y)$ and $z_t = \varphi^t(z)$. Let $t_0 \in \R$ be such that $z_{t_0} \in \wt{W}^u(y)$. Recall that by Proposition \ref{holdef} (c), $H^{cu}$ is given by the limit $$H^{cu}_{yz} = \lim_{t \to \infty} D \varphi^{t-t_0} \circ I_{y_{-t} z_{-t+t_0}} \circ D\varphi^{-t},$$
	
 Therefore, to conclude the proof it suffices to show that 
	$$  \lim_{t \to \infty} \rho_z(D \varphi^{t-t_0} \circ I_{y_{-t} z_{-t+t_0}} \circ D\varphi^{-t}(\cD_y), \cD_z) = 0.$$
	
	 Using $D\varphi^t$ invariance of $\cD$, for $t > t_0$:
$$\begin{aligned}&\rho_z(D \varphi^{t-t_0} \circ I_{y_{-t} z_{-t+t_0}} \circ D\varphi^{-t}(\cD_y), \cD_z) \\ 
	&\leq  Ce^{(\tau -1)t} \rho_z( I_{y_{-t} z_{-t+t_0}} \circ D\varphi^{-t}(\cD_y), D\varphi^{-t+t_0}\cD_z) \\
	&= Ce^{(\tau -1)t}  \rho_z( I_{y_{-t} z_{-t+t_0}} \cD_{y_{-t}}, \cD_{z_{-t+t_0}}).
	\end{aligned} $$

It remains to bound $ \rho_z( I_{y_{-t} z_{-t+t_0}} \cD_{y_{-t}}, \cD_{z_{-t+t_0}})$. From the construction of the map $I_{y_{-t} z_{-t+t_0}}$ (Proposition \ref{holdef} (c)), the family of maps $I$ has $C^1$ dependence on the basepoints since the bundle $E^s$ itself is $C^1$. Moreover, recall that the distribution $\cD$ is also of class $C^1$ on $\cU^s$, in fact $C^\infty$, but it may fail to be uniformly so since $\cU^s$ is an open set. However, since both $y_{-t}$ and $z_{-t+t_0}$ are negatively asymptotic to $p_-$, i.e., they shadow a periodic point $p \in \cU^s$ as $t \to \infty$, for sufficiently large $t$ they must lie within some compact set $N \subseteq \cU^s$ which is a closed tubular neighborhood of the periodic point $p$. Therefore $\cD$ is uniformly $C^1$ on the orbits $y_{-t}$ and $z_{-t+t_0}$ for $t$ sufficiently large so that  that $\rho_z( I_{y_{-t} z_{-t+t_0}} \cD_{y_{-t}}, \cD_{z_{-t+t_0}}) \leq C^{''} d(y_{-t}, z_{-t+t_0})$, since both the distribution $\cD$ and the maps $I$ are uniformly $C^1$ on the basepoints and hence uniformly Lipschitz. The distance $d(y_{-t}, z_{-t+t_0}) \leq C''' e^{-t}$, since $y \in \wW^{cu}(z)$ with a rate of $e^{-t}$, which suffices to conclude the proof, since we have:
$$\begin{aligned}&d_z(D \varphi^{t-t_0} \circ I_{y_{-t} z_{-t+t_0}} \circ D\varphi^{-t}(\cD_y), \cD_z) \\
	&\leq Ce^{(\tau -1)t}  d_z( I_{y_{-t} z_{-t+t_0}} \cD_{y_{-t}}, \cD_{z_{-t+t_0}}) \\
	& \leq Ce^{(\tau -1)t} C^{''} d(y_{-t}, z_{-t+t_0}) \\
	&\leq C e^{(\tau -1)t} e^{-t}  = C e^{(\tau -2)t}\to 0,
\end{aligned} $$
where we absorb all the constants in the last line into one.
\end{proof}

With the claim above, we now conclude the proof of Lemma \ref{holinv}. For this, it suffices to observe that since $\cU^s$ is open dense and the periodic orbits of an Anosov flow are also dense, there exists a dense set in $\wW^{s}(x_0)$ of $y$ satisfying the hypotheses of Lemma \ref{holinvper}. That is, for $y$ on a dense set of $\wW^{s}(x_0)$ we have $Dh^{cu}_{yz} \cD_y = \cD_z$. But the maps $Dh^{cu}_{yz}$ depend $C^0$ on the basepoint $y$ and so does the distribution $\cD$, so that  by continuity $Dh^{cu}_{yz} \cD_y = \cD_z$ holds for all $y \in \wW^{s}(x_0) \setminus \wW^{cu}(x_1)$, as desired.
\end{proof}

We now finally conclude the proof of Proposition \ref{dsismax}. Since $\wW^{cs}(x_1) \neq \wW^{cs}(x_0)$, the natural projections $\wW^{s}(x_i) \to \partial M$, $i = 0,1$, form a system of charts with transition maps $h^{cu}$, which by Subsection \ref{ssec:hols} are $C^1$, and thus give $\partial M$ a $C^1$ structure. Moreover, since $h^{cu}(\cF_0) = \cF_1$ there is a well-defined  topological foliation $\cF$ of $\partial M$ associated to the distribution $\cD$ (c.f. Subsection \ref{sssec: kanai}). Since the distribution $\cD_p$ is  $\Gamma$-invariant over the $\Gamma$-orbits of the $x_i$, we can consider the leaves of $\wW^s$ over $\Gamma \cdot x_i$ ($i = 0,1$) also as charts on $\partial M$, so that there is a well-defined action of $\Gamma$ on $\partial M$ which also preserves the foliation $\cF$. By a well-known argument of Hamenstädt (see \cite[Sec. 4]{ham}), any $\Gamma$-invariant topological foliation of $\partial M$ is trivial, and since it has positive dimension we have $d_u =d$ and  $\cD = E^s$ . 

Hence for $x$ in the open and dense $\cU^s$ we have that $G \cdot \wW^{cu}(x)$ contains an open set in $\wt{V}$ around $\wW^{cu}(x)$; since the action of $G$ preserves the $\wW^{cu}$ foliation and acts by $C^\infty$ diffeomorphisms, this implies that at $x$ the foliation $\wW^{cu}$ is $C^\infty$. To see this consider the map $a_g: G \to \wt{V}$ given by $g \mapsto g \cdot x$; let $D_0$ be a small open disk transverse (and of complementary dimension) to a hypersurface tangent to the kernel of $Da_g$; then $D_0 \times \wW^{cu}(x) \to \wt{V}$ given by $(g, y) \mapsto g \cdot y$ defines a $C^\infty$-diffeomorphism onto its image with the product structure $D_0 \times \wW^{cu}(x)$ compatible with the foliation, namely, $\wW^{cu}$ is a $C^\infty$ foliation in a neighborhood of $\wW^{cu}(x)$.
\end{proof}

The map $\wt{V} \to \partial \wt{M}$ given by $v \mapsto v_-$ induces a homeomorphism of the space $\wt{V}/\wW^{cu}$ of $\wW^{cu}$-leaves to $\partial \wt{M}$, and we identify these from now on. The previous paragraph shows that there exists an open set $U \subseteq \partial \wt{M}$ on which $\wW^{cu}$ is smooth. The subset of $\partial \wt{M}$ on which is $\wW^{cu}$ smooth is invariant by the action of $\Gamma$ on $\partial \wt{M}$ and we use this now to conclude the proof. Each $\gamma \in \Gamma$ acts on $\partial \wt{M} \approx S^n$ with north-south dynamics. Take $\gamma \in \Gamma$ such that $\gamma_-$, the repelling fixed point of $\gamma$ on $\partial \wt{M}$, lies in $U$. This is possible since the set of $(\gamma_-, \gamma_+) \in \partial^2 \wt{M}$ which are endpoints of axes of $\gamma \in \Gamma$ is dense, i.e. the closed geodesics are periodic in the space of geodesics. Then $\gamma^n(U)$ converges to $\partial \wt{M} - \{\gamma_+\}$, where $\gamma_+$, the repelling fixed point of $\gamma$ on $\partial \wt{M}$. Hence we showed that $\wW^{cu}$ is $C^\infty$ except perhaps at the leaf corresponding to $\gamma_+$. To conclude now repeat the argument with any $\gamma'$ such that $\gamma'_+ \neq \gamma_+$. 

Therefore we find that the foliation $\wW^{cu}$ is a $C^\infty$ foliation everywhere. Since $\iota$ maps $\wW^{cu}$-leaves to $\wW^{cs}$-leaves, we similarly conclude that $\wW^{cs}$ is $C^\infty$. The result now follows from the main theorem of \cite{bfl}, which shows that $C^\infty$ smothness of horospherical foliations implies that the geodesic flow of $(M, g)$ is $C^\infty$ conjugate to that of a  manifold with constant negative sectional curvature. Then the minimal entropy rigidity theorem \cite{bcg} shows that $M$ must have constant negative sectional curvature.

\end{document}